\newcommand{\footremember}[2]{%
    \footnote{#2}
    \newcounter{#1}
    \setcounter{#1}{\value{footnote}}%
}
\author{%
  Fabián Arias\footremember{alley}{Facultad de Ciencias Básicas, Universidad Tecnológica de Bolívar, Colombia; email address: \tt{farias@utb.edu.co}, ORCID 0000-0002-2944-0427}%
  \and Jerson Borja\footremember{trailer}{Departamento de Matemáticas, Universidad de Córdoba, Colombia; email address: \tt{jersonborjas@correo.unicordoba.edu.co}, ORCID 0000-0003-4366-4328}%
  \and Calixto Rhenals \footnote{Departamento de Matemáticas, Universidad de Córdoba, Colombia; email address: \tt{calixtorhenalsj@correo.unicordoba.edu.co}}%
  }
\date{}
\newtheorem{proposition}{Proposition}[section]
\newtheorem{lemma}[proposition]{Lemma}
\newtheorem{theorem}[proposition]{Theorem}
\newtheorem*{theorem*}{Theorem}
\newtheorem{corollary}[proposition]{Corollary}
\theoremstyle{definition}
\newtheorem{remark}[proposition]{Remark}
\newtheorem{example}[proposition]{Example}
\providecommand{\keywords}[1]
{
  \small	
  \textbf{\textit{Keywords---}} #1
}
\title{The Frobenius problem for numerical semigroups generated by tails of sequences of the form $ca^n-d$}
\date{}
\begin{document}
\maketitle

\begin{abstract}
For a positive integer $n$, we consider the submonoid $S_n$ of $\mathbb N$ generated by the sequence of positive integers $\mathbf s_j=ca^{n+j}-d$, $j\in \mathbb N$, where $a, c$ and $d$ are integer, $a\geq 2$ and $c$ is positive.

In this paper, we solve the Frobenius problem for the numerical semigroup $(1/\lambda)S_n$, where $\lambda=\gcd(S_n)$, under fairly general conditions on $a, c$ and $d$. We give conditions to determine the embedding dimension and the minimal generating set of $S_n$, and we also characterize the maximum element of the Apéry set ${\rm Ap}(S_n, \mathbf s_0)$.   
\end{abstract}

\keywords{Frobenius problem, numerical semigroup, Apéry set, type.}

\section{Introduction}

\paragraph{Numerical semigroups and submonoids of $\mathbb N$.}
Let $\mathbb N=\{0,1,\ldots\}$ be the set of natural numbers and $\mathbb Z$ be the set of integer numbers. A numerical semigroup is a subset $S$ of $\mathbb N$ that is closed under addition, $0\in S$, and $\mathbb N\setminus S$ is a finite set. For a subset $A$ of $\mathbb N$, we denote by $\langle A\rangle$ the submonoid of $\mathbb N$ generated by $A$: 
\begin{equation*}
    \langle A\rangle=\left\{\sum_{j=1}^{n}c_j\mathbf a_j:n\in\mathbb N\setminus\{0\}, c_j\in\mathbb N, \mathbf a_j\in A\text{ for }j=1,\ldots, n\right\}.
\end{equation*}
It is known that $\langle A\rangle$ is a numerical semigroup if and only if ${\gcd}(A)=1$ (see \cite{rosales}). 

Let $M$ be a nontrivial submonoid of $\mathbb N$. Then, $M$ is finitely generated and, moreover, there exists a subset of $M$ that generates $M$ such that any other generating set of $M$ contains it; such generating set is called the \textit{minimal generating set} of $M$ and its elements, which are called the \textit{minimal generators} of $M$, are precisely the nonzero elements belonging to $M$ that cannot be written as the sum of two nonzero elements in $M$. We denote by $e(M)$ the number of minimal generators of $M$.  

If $S$ is a numerical semigroup, we call $e(S)$ the \textit{embedding dimension} of $S$. The maximum element in $\mathbb N\setminus S$ is called the \textit{Frobenius number of $S$}, which is denoted by ${\rm F}(S)$ and the cardinality of $\mathbb N\setminus S$ is the \textit{genus of $S$}, which is denoted by ${\rm g}(S)$. 

Assume that $M$ is a nontrivial submonoid of $\mathbb N$. If $x\in M\setminus\{0\}$, the \textit{Apéry set of $x$ in $M$}, denoted by ${\rm Ap}(M,x)$, is defined by ${\rm Ap}(M,x)=\{m\in M:m-x\notin M\}$. If $d=\gcd(M)$ and $S$ is the numerical semigroup obtained by dividing every element in $M$ by $d$, then we have the relation ${\rm Ap}(M,x)=\{ds:s\in {\rm Ap}(S,x/d)\}$. If $S$ is a numerical semigroup and $x\in S\setminus\{0\}$, then ${\rm Ap}(S,x)=\{w(0), \ldots, w(x-1)\}$, where $w(i)$ is the least element in $S$ congruent to $i$ modulo $x$; we also have the following two properties:
\begin{enumerate}
    \item ${\rm F}(S)=\max {\rm Ap}(S, x)-x$, 
    \item $\displaystyle{\rm g}(S)=\frac{1}{x}\left(\sum_{s\in {\rm Ap}(S, x)}s\right)-\frac{x-1}{2}$.
\end{enumerate}

An integer number $x$ is a \textit{pseudo-Frobenius number} of $S$ if $x\notin S$ and $x+s\in S$ for all $s\in S\setminus\{0\}$. The set of pseudo-Frobenius numbers of $S$ is denoted by ${\rm PF}(S)$. We have ${\rm PF}(S)$ is nonempty, as ${\rm F}(S)$ is a pseudo-Frobenius number of $S$. A partial order $\leq_S$ is defined in $S$ as follows: for $a, b\in S$, $a\leq_Sb$ if and only if $b-a\in S$. If $x\in S\setminus\{0\}$, then Pseudo-Frobenius numbers of $S$ are precisely those integers of the form $w-x$, where $w$ is a maximal element in ${\rm Ap}(S, x)$ with respect to $\leq_S$, that is 
\begin{equation*}
    {\rm PF}(S)=\left\{w-x: w\in {\rm Maximals}_{\leq_S}{\rm Ap}(S,x)\right\}.
\end{equation*}
The set ${\rm PF}(S)$ is finite, and its cardinality is called \textit{the type} of $S$, which is denoted by ${\rm t}(S)$. All these concepts and results can be found in \cite{rosales}.

\paragraph{Submonoids and numerical semigroups generated by tails of sequences.}
Let us consider a sequence $(x_n)_{n\geq 1}$ of positive integers, which we will call the \textit{generating sequence}. For each $n\geq 1$, the $n-$tail of $(x_n)_{n\geq 1}$ is the subsequence $(x_n, x_{n+1}, \ldots)$, and we define a submonoid $S_n$ of $\mathbb N$ by $S_n=\langle\{x_{n+j}:j\in\mathbb N\}\rangle$. As we know, $S_n$ is a numerical semigroup if and only if $\gcd(x_n, x_{n+1},\ldots)=1$. In this way, we associate a family of submonoids of $\mathbb N$ with a given sequence of positive integers. In recent works on numerical semigroups \cite{gu*, gu, mersenne, repunit, thabit, songthabit, song}, a particular generating sequence $(x_n)_{n\geq 1}$ is given, and for the numerical semigroups $S_n$ associated with this sequence, they find
\begin{enumerate}
    \item the minimal generating set and the embedding dimension of $S_n$;
    \item the Apéry set ${\rm Ap}(S_n, x_n)$, the Frobenius number and genus of $S_n$;
    \item in most cases, the pseudo-Frobenius numbers and the type of $S_n$ are computed. 
\end{enumerate}
The generating sequences considered in those works have the form $x_n=ca^n-d$, for particular values of the integers $a, c$ and $d$, where $a\geq 2$ and $c$ is positive.

\paragraph{Presentation of the results.} We give arbitrary integers $a, c$ and $d$, with $a\geq 2$ and $c>0$. Then, we consider the generating sequence $(x_n)_{n\geq 1}$ given by
\begin{equation*}\label{eq-sequence_x_n}
x_n=ca^n-d. 
\end{equation*}
One first condition on $a, c$ and $d$ is $d<ca$, since we require $x_1=ca-d>0$. This sequence satisfies the simple linear recurrence relation $x_{n+1}=ax_n+(a-1)d$, for all $n\geq 1$. Now, for any $n\geq 1$ we define a submonoid $S_n$ of $\mathbb N$ given by 
\begin{equation*}\label{eq-monoid_S_n}
    S_n=\langle\{x_{n+j}:j\in\mathbb N\}\rangle.
\end{equation*}
To simplify notation, given $n\geq1$, we set $\mathbf s_j=x_{n+j}$, for all $j\in \mathbb N$. So, $S_n=\langle\{\mathbf s_{j}:j\in\mathbb N\}\rangle$. Besides, the $\mathbf s_j$'s satisfy the the recurrence relation $\mathbf s_{j+1}=a\mathbf s_j+(a-1)d$, for all $j\geq 0$. 

For our purposes, it will be enough to assume that $d$ is relatively prime to $a$ and $c$ (otherwise, we can divide by an adequate factor to reduce to this case). The following conjectures arise from the results from \cite{gu*, gu, mersenne, repunit, thabit, songthabit, song}:
\begin{enumerate}
    \item The minimal generating set of $S_n$ has the form $\{\mathbf s_j:0\leq j<m\}$ for some $m>0$ (such $m$ must be equal to $e(S_n)$). 
    \item For all $n\geq 1$, $e(S_n)\geq n$.
    \item For all $n\geq1$, ${\rm t}(S_n)\leq e(S_n)-1$.
\end{enumerate}
In this work, we prove all these conjectures in a very general setting, and give a characterization of $e(S_n)$ that involves \textit{repunit numbers} and the elements $\mathbf s_0$ and $\mathbf s_1$.

In general, we have an upper bound for the type of a numerical semigroup $S$: ${\rm t}(S)\leq m(S)-1$, where $m(S)$ is the minimal nonzero element in $S$. We also know that $e(S)\leq m(S)$. Thus, the third conjecture above establishes an improvement to the general upper bound for the type for the numerical semigroups $S_n$. 

One of our main results is the characterization of the of $e(S_n)$. Let $\lambda=\gcd(S_n)$. The $n-$th repunit number in base $a$, $\mathbf R_n$, is defined by 
\begin{equation*}
    \mathbf R_n=1+a+\cdots+a^{n-1}=\frac{a^n-1}{a-1}.
\end{equation*}
We prove that if $m$ is the least positive integer such that $\mathbf s_0/\lambda\leq \mathbf R_m$, and if we also have $\mathbf R_m\leq \mathbf s_1/\lambda$, then $e(S_n)=m$. 

We show that the condition $\mathbf R_m\leq \mathbf s_1/\lambda$ is satisfied in important general cases, for instance, if $d>0$. 

The characterization of the Apéry set ${\rm Ap}(S_n,\mathbf s_0)$ is crucial for finding the Frobenius number, the genus, the pseudo-Frobenius numbers, and the type of $S_n$. To give our characterization of ${\rm Ap}(S_n,\mathbf s_0)$, we need a special set of $r-$tuples of integers, which we describe now. Given an integer $r\geq 1$, a \textit{residual $r-$tuple} (here, we use the terminology given in \cite{mersenne} for such tuples) is a tuple of integers $(\alpha_1,\ldots,\alpha_r)$ such that 
\begin{enumerate}
    \item $0\leq \alpha_i\leq a$ for all $i\in\{1,\ldots, r\}$ and 
    \item if $\alpha_{i_0}=a$ for some $i_0\in \{2,\ldots, r\}$, then $\alpha_i=0$ for all $i\in\{1,\dots,i_0-1\}$.
\end{enumerate}
The set of all residual $r-$tuples will be denoted by $A(r)$. The size of $A(r)$ is $(a^{r+1}-1)/(a-1)$ (see \cite[Theorem 12]{repunit}). An important order relation on $A(r)$ is the \textit{co-lexicographic order $\leq_c$} in which $(\alpha_1, \ldots, \alpha_r)\leq_c(\beta_1,\ldots, \beta_{r})$ if, either, $(\alpha_1, \ldots, \alpha_r)=(\beta_1,\ldots, \beta_{r})$ or there is some $t\in\{1,\ldots, r\}$ such that $\alpha_t<\beta_t$ and $\alpha_j=\beta_j$ for $j=t+1, \ldots, r$. 

If $m=e(S_n)$ and we assume that $\mathbf R_m\leq \mathbf s_1/\lambda$, where $\lambda=\gcd(S_n)$, the we prove that there is a unique residual $(m-1)-$tuple $(\alpha_1, \ldots, \alpha_{m-1})$ such that ${\rm Ap}(S_n, \mathbf s_0)$ is equal to
\begin{equation*}
    \left\{\sum_{j=1}^{m-1}\beta_j\mathbf s_j: (\beta_1,\ldots,\beta_{m-1})\in A(m-1), (\beta_1,\ldots,\beta_{m-1})\leq_c(\alpha_1,\ldots,\alpha_{m-1})\right\}
\end{equation*}
and $\max {\rm Ap}(S_n, \mathbf s_0)=\sum_{j=1}^{m-1}\alpha_j\mathbf s_j$. Moreover, we show that this residual $(m-1)-$tuple $(\alpha_1, \ldots, \alpha_{m-1})$ is the unique that satisfy the relation 
\begin{equation*}
    (\mathbf s_0/\lambda)-1=\sum_{j=1}^{m-1}\alpha_j\mathbf R_j.
\end{equation*}
We test our results on some generating sequences. For instance, we deal with the generating sequence $(2^k-1)2^n-1$, when $k>2^n$ (this case was not treated in \cite{gu}); we also treat sequences of the form $(a^k-1)a^n-1$, for $a\geq 3$ and $k\geq 1$. 

Finally, if $m=e(S_n)$, $\lambda=\gcd(S_n)$ and we assume that $\mathbf R_m\leq \mathbf s_1/\lambda$, then ${\rm t}(S_n)\leq e(S_n)-1$, and verify Wilf's conjecture for the numerical semigroup $(1/\lambda)S_n$, under the given conditions.

\section{Repunit numbers and relations between the generators of \texorpdfstring{$S_n$}{Lg}}\label{sec-relations}

In this small section we show some basic properties of repunit numbers and the generators of $S_n$ that we will use later. For convenience we define $\mathbf R_0=0$. The sequence of repunit numbers in base $a$ satisfies the recurrence relation $\mathbf R_{n+1}=a\mathbf R_n+1$, for all $n\geq 0$. It is easily seen that 
\begin{equation}\label{eq-general_relation_repunits}
    \mathbf R_n=(a-1)\sum_{j=m}^{n-1}\mathbf R_j+\mathbf R_m+n-m,
\end{equation}
whenever $0\leq m<n$. By using the recurrence relation satisfied by the generators of $S_n$, that is, 
$\mathbf s_{j+1}=a\mathbf s_j+(a-1)d$, for all $j\geq 0$, we can derive the following formula that generalizes \eqref{eq-general_relation_repunits}: 
\begin{equation}\label{eq-general_relation_s_j}
 \mathbf s_t=(a-1)\sum_{j=u}^{t-1}\mathbf s_j+\mathbf s_u+(t-u)(a-1)d, 
\end{equation}
where $0\leq u<t$. In the following lemma we give the fundamental relations between the $\mathbf s_j$'s.

The elements of $S_n$ can be represented as linear combinations of the $\mathbf s_j$'s with nonnegative coefficients, but it is convenient to consider linear combinations of the $\mathbf s_j$'s with integer coefficients. 

\begin{proposition}\label{prop-coefficient_s_0_and_s_1}
If $\alpha, \beta_1, \beta_2,\ldots, \beta_{r}$ are integers, then
\begin{equation}\label{eq-coefficient_s_0_and_s_1}
    \alpha \mathbf s_0+\sum_{j=1}^{r}\beta_j \mathbf s_{j}=\left(\alpha-a\sum_{j=2}^{r}\beta_j\mathbf R_{j-1}\right)\mathbf s_0+\left(\sum_{j=1}^{r}\beta_j\mathbf R_j\right)\mathbf s_{1}
\end{equation}
Thus, any linear combination of the $\mathbf s_j$'s can be transformed into a linear combination of $\mathbf s_0$ and $\mathbf s_{1}$. The sum of the coefficients of both sides combinations in \eqref{eq-coefficient_s_0_and_s_1} is the same.
\end{proposition}

\begin{proof}It is enough to prove that for all $j\geq 1$ and integers $\alpha$ and $\beta$,
\begin{equation*}
    \alpha \mathbf s_0+\beta \mathbf s_{j}=\left(\alpha-a\beta\mathbf R_{j-1}\right)\mathbf s_0+\left(\beta\mathbf R_{j}\right)\mathbf s_{1}.
\end{equation*}
By induction on $j$, assume that the result is true for $j\geq 1$ and all integers $\alpha$ and $\beta$. Note that 
$\mathbf s_{j+1}=a\mathbf s_j+(a-1)d=a\mathbf s_j+\mathbf s_1-a\mathbf s_0$. Then we have 
\begin{equation*}
    \alpha\mathbf s_0+\beta\mathbf s_{j+1}=(\alpha-a\beta)\mathbf s_0+\beta\mathbf s_1+a\beta\mathbf s_j.
\end{equation*}
Now, by applying the induction hypothesis to $(\alpha-a\beta)\mathbf s_0+a\beta\mathbf s_j$, we get 
\begin{equation*}
    (\alpha-a\beta)\mathbf s_0+a\beta\mathbf s_j=\left(\alpha-a\beta-a^2\beta\mathbf R_{j-1}\right)\mathbf s_0+\left(a\beta\mathbf R_j\right)\mathbf s_1,
\end{equation*}
and by the recurrence relation between the $\mathbf R_j$'s, we obtain
\begin{align*}
    \alpha \mathbf s_0+\beta \mathbf s_{j+1}&=\left(\alpha-a\beta-a^2\beta\mathbf R_{j-1}\right)\mathbf s_0+\left(\beta+a\beta\mathbf R_j\right)\mathbf s_1\\
    &=\left(\alpha-a\beta\mathbf R_j\right)\mathbf s_0+\left(\beta\mathbf R_{j+1}\right)\mathbf s_{1}.
\end{align*}
This ends the proof of our claim. Now, the sum of coefficients in the right hand side of \eqref{eq-coefficient_s_0_and_s_1} is  
\begin{align*}
    \alpha-a\sum_{j=2}^{r}\beta_j\mathbf R_{j-1}+\sum_{j=1}^{r}\beta_j\mathbf R_j&=\alpha+\beta_1+\sum_{j=2}^{r}\beta_j\left(\mathbf R_j-a\mathbf R_{j-1}\right)\\
    &=\alpha+\beta_1+\sum_{j=2}^{r}\beta_j\\
    &=\alpha+\sum_{j=1}^{r}\beta_j,
\end{align*}
which is the sum of the coefficients in the left hand side of \eqref{eq-coefficient_s_0_and_s_1}.
\end{proof}

\section{\texorpdfstring{The embedding dimension of $S_n$}{Lg}}\label{sec-representation}

We give a characterization of the embedding dimension $S_n$. First, we show that there is a unique positive integer $m$ such that $\{\mathbf s_j: 0\leq j<m\}$ is the minimal generating set of $S_n$, and, of course $e(S_n)=m$. Then, we find conditions on $\mathbf s_0, \mathbf s_1$ and repunit numbers to determine such integer $m$.

\begin{lemma}\label{lm-generatorsetofS_n}
Let $r>0$ be such that $\mathbf s_{r}\in \langle\{\mathbf s_{j}:0\leq j<r\}\rangle$. Then $S_n=\langle\{\mathbf s_{j}:0\leq j<r\}\rangle$. 
\end{lemma}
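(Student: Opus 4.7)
The plan is to show that under the hypothesis $\mathbf s_r \in M := \langle\{\mathbf s_j : 0\leq j<r\}\rangle$, every generator $\mathbf s_{r+k}$ with $k\geq 0$ already lies in $M$; then both containments $M\subseteq S_n$ and $S_n\subseteq M$ are immediate.

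The key tool is the previous lemma applied to the finite generating set $A=\{\mathbf s_j : 0\leq j < r\}$. I would first verify its hypothesis (1): for each $s\in A$ I need $as+(a-1)d\in M$. If $s=\mathbf s_j$ with $0\leq j\leq r-2$, then by the recurrence \eqref{eq-recurrence_of_s_j} we have $as+(a-1)d=\mathbf s_{j+1}$, and $\mathbf s_{j+1}\in A\subseteq M$. If $s=\mathbf s_{r-1}$, then $as+(a-1)d=\mathbf s_r$, which lies in $M$ by the standing hypothesis. Hence condition (1) holds, so Lemma \ref{Glm-as+1ifandonlyif} yields condition (2): $am+(a-1)d\in M$ for every nonzero $m\in M$.

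The rest is a routine induction on $k\geq 0$ showing $\mathbf s_{r+k}\in M$. The base case $k=0$ is exactly the hypothesis. For the inductive step, assume $\mathbf s_{r+k}\in M$; since $\mathbf s_{r+k}\neq 0$, applying the just-established property to $m=\mathbf s_{r+k}$ gives $a\mathbf s_{r+k}+(a-1)d\in M$, which by \eqref{eq-recurrence_of_s_j} equals $\mathbf s_{r+k+1}$. Therefore $\{\mathbf s_j : j\in\mathbb N\}\subseteq M$, whence $S_n\subseteq M$; the reverse inclusion is obvious because every generator of $M$ is a generator of $S_n$.

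There is no real obstacle here: once Lemma \ref{Glm-as+1ifandonlyif} is in hand, the only subtlety is remembering to check its hypothesis for the \emph{last} generator $\mathbf s_{r-1}$ (where the conclusion $\mathbf s_r\in M$ is needed and is exactly what we are given). Everything else is bookkeeping with the recurrence.
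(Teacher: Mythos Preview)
Your proof is correct and follows essentially the same approach as the paper's: both verify the hypothesis of Lemma~\ref{Glm-as+1ifandonlyif} on the finite generating set (with the only nontrivial check being $a\mathbf s_{r-1}+(a-1)d=\mathbf s_r\in M$), then use the recurrence to inductively absorb all further $\mathbf s_j$ into $M$. The only cosmetic difference is that the paper starts its induction at $j=0$ rather than at $j=r$, which is immaterial.
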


\begin{proof}
Let $S=\langle\{\mathbf s_{j}:0\leq j<r\}\rangle$. First, we show that $as+(a-1)d\in S$, for all $s\in S\setminus\{0\}$. In fact, if $s\in S\setminus\lbrace 0\rbrace$, then $s=\sum\limits_{i=0}^{r-1}\alpha_i\mathbf s_i$, where $\alpha_i\in \mathbb{N}, i=0, \ldots, r-1$, and some $\alpha_{j}>0$. So 
\begin{align*}
as+(a-1)d&= a\sum_{i=0}^{r-1}\alpha_i \mathbf s_i +(a-1)d \\
&=a\sum_{i=0, i\neq j}^{r-2}\alpha_i \mathbf s_i +a\alpha_j\mathbf s_{j}+(a-1)d\\
&=a\sum_{i=0, i\neq j}^{r-2}\alpha_i \mathbf s_i +a(\alpha_j-1)\mathbf s_{j}+a\mathbf s_j+(a-1)d\\
&=a\sum_{i=0, i\neq j}^{r-2}\alpha_i \mathbf s_i +a(\alpha_j-1)\mathbf s_{j}+\mathbf s_{j+1}.
\end{align*}
If $j<r-1$, then $j+1<r$ and $\mathbf s_{j+1}\in S$. Now, if $j=r-1$, then $j+1=r$ and $\mathbf s_{j+1}=\mathbf s_r\in S$, by hypothesis. Thus, we see that $as+(a-1)d\in S$. 

In order to prove that $S_n=S$, it suffices to prove that $\mathbf s_{j}\in S$ for all $j\geq 0$. Clearly $\mathbf s_0\in S$ and, if $\mathbf s_j\in S$, where $j\geq 0$, then $\mathbf s_{j+1}=a\mathbf s_{j}+(a-1)d\in S$. This ends the proof. 
\end{proof}

Since $S_n$ is finitely generated, there is some $m>0$ such that $\mathbf s_m\in \langle\{\mathbf s_{j}:0\leq j<m\}\rangle$ and $\mathbf s_{m-1}\notin \langle\{\mathbf s_{j}:0\leq j<m-1\}\rangle$. 

\begin{theorem}\label{teo-minimalgeneratingsetS_n}
Assume that for some $m>0$ it holds that $\mathbf s_{m}\in \langle\{\mathbf s_{j}:0\leq j<m\}\rangle$ but $\mathbf s_{m-1}\notin \langle\{\mathbf s_{j}:0\leq j<m-1\}\rangle$. Then $\{\mathbf s_{j}:0\leq j<m\}$ is the minimal generating set of $S_n$. In particular, $m=e(S_n)$.
\end{theorem}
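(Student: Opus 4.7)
The plan is to show that $\{\mathbf{s}_j:0\le j<m\}$ is exactly the minimal generating set of $S_n$. The first step is immediate: by Lemma \ref{lm-generatorsetofS_n} applied with $r=m$, we get $S_n=\langle\{\mathbf{s}_j:0\le j<m\}\rangle$, so this is a generating set, and hence the minimal generating set is contained in it. What remains is to verify that each $\mathbf{s}_i$, for $0\le i\le m-1$, is a minimal generator, i.e.\ cannot be written as a sum of two nonzero elements of $S_n$.

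The key intermediate step I would establish is the following claim: for every $i$ with $0\le i\le m-1$, one has $\mathbf{s}_i\notin\langle\{\mathbf{s}_j:0\le j<i\}\rangle$. The case $i=0$ is trivial (the empty generating set only produces $0$), and the case $i=m-1$ is exactly the second hypothesis of the theorem. For $0<i<m-1$, I argue by contradiction: if $\mathbf{s}_i$ belonged to $\langle\{\mathbf{s}_j:0\le j<i\}\rangle$, then Lemma \ref{lm-generatorsetofS_n} with $r=i$ would force $S_n=\langle\{\mathbf{s}_j:0\le j<i\}\rangle$, and in particular $\mathbf{s}_{m-1}\in\langle\{\mathbf{s}_j:0\le j<i\}\rangle\subseteq\langle\{\mathbf{s}_j:0\le j<m-1\}\rangle$, contradicting the hypothesis on $\mathbf{s}_{m-1}$. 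So Lemma \ref{lm-generatorsetofS_n} does almost all the work here.

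From this claim, minimality of each $\mathbf{s}_i$ follows using the strict monotonicity of the generating sequence (since $\mathbf{s}_j=ca^{n+j}-d$ and $a\ge 2$, we have $\mathbf{s}_0<\mathbf{s}_1<\cdots$). Suppose $\mathbf{s}_i=x+y$ with $x,y\in S_n\setminus\{0\}$. Writing $x=\sum_{j=0}^{m-1}c_j\mathbf{s}_j$ with $c_j\in\mathbb{N}$ (possible because $\{\mathbf{s}_j:0\le j<m\}$ generates $S_n$), the inequality $x<\mathbf{s}_i$ together with $\mathbf{s}_j\ge\mathbf{s}_i$ for $j\ge i$ forces $c_j=0$ for all $j\ge i$; the same holds for $y$. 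Adding, we obtain $\mathbf{s}_i\in\langle\{\mathbf{s}_j:0\le j<i\}\rangle$, contradicting the intermediate claim. Hence every $\mathbf{s}_i$ with $0\le i<m$ is a minimal generator, and $\{\mathbf{s}_j:0\le j<m\}$ is the minimal generating set, giving $e(S_n)=m$.

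There is no serious obstacle: the main point is to recognize that Lemma \ref{lm-generatorsetofS_n} can be applied not just at $r=m$ but at arbitrary smaller $r$ to force a chain of inclusions, and to exploit monotonicity of $(\mathbf{s}_j)$ to reduce any nontrivial decomposition of $\mathbf{s}_i$ to one involving only smaller generators. The delicate bookkeeping is ensuring the two hypotheses (the positive condition on $\mathbf{s}_m$ and the negative condition on $\mathbf{s}_{m-1}$) both get used; the first gives the generating-set identity, while the second, when combined with Lemma \ref{lm-generatorsetofS_n}, propagates downward to every index $i<m$.
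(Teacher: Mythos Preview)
Your proof is correct and follows essentially the same route as the paper: both use Lemma~\ref{lm-generatorsetofS_n} first at $r=m$ to get a generating set, and then at a smaller index to derive the contradiction $\mathbf s_{m-1}\in\langle\{\mathbf s_j:0\le j<m-1\}\rangle$. The only difference is that you spell out explicitly (via the monotonicity of $(\mathbf s_j)$) why a redundant generator $\mathbf s_i$ must already lie in $\langle\{\mathbf s_j:0\le j<i\}\rangle$, whereas the paper asserts this directly.
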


\begin{proof}
By Lemma \ref{lm-generatorsetofS_n}, the set $\{\mathbf s_{j}:0\leq j<m\}$ generates $S_n$. If this is not the minimal generating set of $S_n$, then there exists $j_0\in \{0,\ldots, m-1\}$ such that $\mathbf s_{j_0}\in \langle\{\mathbf s_{j}:0\leq j<j_0\}\rangle$. Note that $j_0<m-1$ since $\mathbf s_{m-1}\notin\langle\{\mathbf s_{j}:0\leq j<m-1\}\rangle$. Again, by Lemma \ref{lm-generatorsetofS_n} we have $S_n=\langle\{\mathbf s_{j}:0\leq j<j_0\}\rangle$, but it follows that $\mathbf s_{m-1}\in \langle\{\mathbf s_{j}:0\leq j<j_0\}\rangle\subseteq \langle\{\mathbf s_{j}:0\leq j<m-1\}\rangle$, which is a contradiction. We conclude that $\{\mathbf s_{j}:0\leq j<m\}$ is the minimal generating set of $S_n$. 
\end{proof}

It is clear that the positive integer $m$ in Theorem \ref{teo-minimalgeneratingsetS_n} is unique, so the embedding dimension of $S_n$ is the unique $m>0$ such that $\mathbf s_{m}\in \langle\{\mathbf s_{j}:0\leq j<m\}\rangle$ and $\mathbf s_{m-1}\notin \langle\{\mathbf s_{j}:0\leq j<m-1\}\rangle$. 

\begin{remark}
In Theorem \ref{teo-minimalgeneratingsetS_n} we only use that $S_n$ is a submonoid of $\mathbb N$; we do not need $S_n$ to be a numerical semigroup yet.
\end{remark}

In the following proposition we characterize the conditions under which the embedding dimension of $S_n$ is $1$. 

\begin{proposition}
If $e(S_n)=1$, then $n=1$ and $ca-d$ divides $c-d$. Conversely, if $ca-d$ divides $c-d$, then $e(S_1)=1$.  
\end{proposition}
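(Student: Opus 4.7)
The plan is to reformulate $e(S_n)=1$ as a single divisibility and then do a size estimate. By Theorem~\ref{teo-minimalgeneratingsetS_n} applied with $m=1$ (the side condition $\mathbf{s}_0\notin\langle\emptyset\rangle=\{0\}$ is automatic since $\mathbf{s}_0>0$), the hypothesis $e(S_n)=1$ is equivalent to $\mathbf{s}_1\in\langle\mathbf{s}_0\rangle$, i.e.\ to $\mathbf{s}_0\mid\mathbf{s}_1$. The recurrence $\mathbf{s}_1=a\mathbf{s}_0+(a-1)d$ from \eqref{eq-recurrence_of_s_j} turns this into $\mathbf{s}_0\mid (a-1)d$. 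Here the standing coprimality assumptions $\gcd(d,a)=\gcd(d,c)=1$ enter: they give $\gcd(d,ca^n)=1$, hence $\gcd(d,\mathbf{s}_0)=\gcd(d,ca^n-d)=1$, so the divisibility collapses to $\mathbf{s}_0\mid a-1$.

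The forward direction is now a size comparison. If $n\geq 2$, then from $d<ca$ one has
\[
    \mathbf{s}_0=ca^n-d>ca^n-ca=ca(a^{n-1}-1)\geq ca(a-1)\geq 2(a-1)>a-1,
\]
contradicting $\mathbf{s}_0\mid a-1$. So $n=1$, and what remains is the divisibility $ca-d\mid a-1$, which must be converted to $ca-d\mid c-d$.

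The bridge is the algebraic identity $c(a-1)=(ca-d)-(c-d)$, which shows $ca-d\mid c(a-1)$ if and only if $ca-d\mid c-d$. Since $\gcd(ca-d,c)=\gcd(d,c)=1$, one can freely multiply or cancel $c$, giving the equivalence $ca-d\mid a-1 \iff ca-d\mid c-d$. For the converse statement, I would simply reverse the entire chain: $ca-d\mid c-d$ yields $ca-d\mid a-1$ by the same identity and coprimality, hence $\mathbf{s}_0\mid (a-1)d$, hence $\mathbf{s}_0\mid\mathbf{s}_1$, and Theorem~\ref{teo-minimalgeneratingsetS_n} with $m=1$ concludes $e(S_1)=1$. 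The only mildly non-routine point is spotting the identity $c(a-1)=(ca-d)-(c-d)$ and coupling it with $\gcd(ca-d,c)=1$ to swap between the two formulations of the divisibility; everything else is mechanical.
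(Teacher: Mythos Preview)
Your proof is correct and follows essentially the same route as the paper's: both reduce $e(S_n)=1$ to $\mathbf{s}_0\mid a-1$ via the recurrence and $\gcd(\mathbf{s}_0,d)=1$, rule out $n\geq 2$ by a size estimate, and then use the identity $ca-d=c(a-1)+(c-d)$ together with $\gcd(ca-d,c)=1$ to pass between the two divisibility formulations. The only cosmetic difference is that for the converse the paper writes $\mathbf{s}_1=ca^2-d=c(a^2-1)+(c-d)$ directly, whereas you reverse the chain through $\mathbf{s}_0\mid a-1$; both are equally short.
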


\begin{proof}
If $e(S_n)=1$, then $\{\mathbf s_0\}$ generates $S_n$. In particular, $\mathbf s_0\mid \mathbf s_1$. So $\mathbf s_0$ divides $\mathbf s_1-a\mathbf s_0=(a-1)d$, and since $\gcd(\mathbf s_0, d)=1$, we must have $\mathbf s_0\mid a-1$. Therefore, $\mathbf s_0\leq a-1$. Since $d<ac$, we have $ca^n-ca<ca^n-d\leq a-1$, from which $ca(a^{n-1}-1)/(a-1)<1$, and this can only happen when $n=1$. Now, the relation $ca-d=c(a-1)+c-d$ implies that $ca-d$ divides $c-d$, since $\mathbf s_0=ca-d$ divides $a-1$. 

Conversely, if $ca-d$ divides $c-d$, then by the relation $ca-d=c(a-1)+c-d$ and the fact that $ca-d$ and $c$ are relatively prime, we conclude that $ca-d$ divides $a-1$. Now, $ca^2-d=c(a^2-1)+c-d$, from which we see that $\mathbf s_0=ca-d$ divides $\mathbf s_1=ca^2-d$ and by Theorem \ref{teo-minimalgeneratingsetS_n}, $e(S_1)=1$.
\end{proof}

Now, we study the sum of coefficients in representations of $\mathbf s_m$ as linear combinations of $\mathbf s_j$'s, $0\leq j<m$ (if such representations exist). 

\begin{lemma}\label{lm-factor-t}
Assume that for some $m>0$, there are integers $\alpha_0, \alpha_1, \ldots, \alpha_{m-1}$ such that $\mathbf s_m=\alpha_0\mathbf s_0+\alpha_{1}\mathbf s_{1}+\cdots+\alpha_{m-1}\mathbf s_{m-1}$. Then, there exists $t\in\mathbb Z$ such that  
\begin{equation*}
    \sum_{j=0}^{m-1}\alpha_j=1+tca^n.
\end{equation*}
Moreover, if the $\alpha_j$'s are nonnegative, then $t>0$. 
\end{lemma}

\begin{proof}
The equation $\mathbf s_m=\sum_{j=0}^{m-1}\alpha_j\mathbf s_j$ implies that 
\begin{equation*}
    ca^{n+m}-d=\sum_{j=0}^{m-1}\alpha_jca^{n+j}-d\sum_{j=0}^{m-1}\alpha_j,
\end{equation*}
from which $d\left(\sum_{j=0}^{m-1}\alpha_j-1\right)=sca^n$, where $s=\sum_{j=0}^{m-1}\alpha_ja^{j}-a^{m}$. Since $d$ is relatively prime to $a$ and $c$, we obtain that $d$ divides $s$. So, $\sum_{j=0}^{m-1}\alpha_j=1+tca^n$, where $t=s/d$. Clearly, if the $\alpha_j$'s are nonnegative, it must be $t>0$. 
\end{proof}

\begin{theorem}\label{teo-lower_bound_of_e(S_n)}
For all $n\in\mathbb Z^+$, we have $e(S_n)\geq n$.
\end{theorem}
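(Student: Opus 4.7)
The plan is to reduce the bound $e(S_n)\geq n$, via Theorem \ref{teo-minimalgeneratingsetS_n}, to the statement: whenever $m\geq 1$ satisfies $\mathbf{s}_m\in\langle\{\mathbf{s}_j:0\leq j<m\}\rangle$, one has $m\geq n$. So I fix such an $m$ together with a nonnegative-integer representation $\mathbf{s}_m=\sum_{j=0}^{m-1}\alpha_j\mathbf{s}_j$ and aim at a clean lower bound on $m$.

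The first step is to extract a quantitative constraint on the coefficients using Lemma \ref{lm-factor-t}: since the $\alpha_j$'s are nonnegative the integer $t$ appearing there is strictly positive, so
$$\sum_{j=0}^{m-1}\alpha_j=1+tca^n\geq 1+ca^n.$$
This is the only nontrivial input from the paper's earlier machinery that the argument needs.

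The second step is a size estimate. Because the sequence $(\mathbf{s}_j)$ is strictly increasing (indeed $\mathbf{s}_{j+1}=a\mathbf{s}_j+(a-1)d>\mathbf{s}_j$), one has $\mathbf{s}_m\geq \mathbf{s}_0\sum_{j=0}^{m-1}\alpha_j\geq \mathbf{s}_0(1+ca^n)$. Substituting $\mathbf{s}_m=ca^{n+m}-d$ and $\mathbf{s}_0=ca^n-d$, expanding the right hand side, and cancelling $-d$ and the common factor $ca^n$, this collapses to the clean inequality $a^m\geq 1+ca^n-d$. Using the standing hypothesis $d<ca$ together with $c\geq 1$ and $a\geq 2$, I then chain
$$a^m\geq 1+ca^n-d>1+ca(a^{n-1}-1)\geq 2a^{n-1}-1\geq a^{n-1},$$
so $a^m>a^{n-1}$, which for integer exponents forces $m\geq n$.

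The main obstacle is not conceptual but purely computational: verifying that the final chain of inequalities is uniformly valid across the admissible parameter range, including the degenerate case $n=1$, where the chain reduces to $a^m>1$ and still yields $m\geq 1=n$. Once the reduction to $a^m\geq 1+ca^n-d$ is in place, the rest is elementary; all of the structural work has already been done by Lemma \ref{lm-factor-t} and by the monotonicity of $(\mathbf{s}_j)$.
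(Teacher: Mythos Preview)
Your proof is correct and follows essentially the same approach as the paper: both apply Lemma~\ref{lm-factor-t} to obtain $\sum\alpha_j\geq 1+ca^n$, then combine the trivial bound $\sum\alpha_j\mathbf{s}_j\geq \mathbf{s}_0\sum\alpha_j$ with the standing hypothesis $d<ca$ to force a contradiction (equivalently, $m\geq n$). The only cosmetic differences are that the paper argues by contradiction directly on $\mathbf{s}_{n-1}$ (showing $\mathbf{s}_{n-1}\notin\langle\{\mathbf{s}_j:0\leq j<n-1\}\rangle$), whereas you phrase it contrapositively for a general $m$, and the paper compresses your final chain of inequalities into the single observation that $d\leq a^{n-1}(ca-1)$ gives $\mathbf{s}_0\geq a^{n-1}$, which is equivalent to $(1+ca^n)\mathbf{s}_0>\mathbf{s}_{n-1}$.
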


\begin{proof}
It suffices to show that $\mathbf s_{n-1}\notin \langle\{\mathbf s_j:0\leq j<n-1\}\rangle$. If we had $\mathbf s_{n-1}=\sum_{j=0}^{n-2}\alpha_j\mathbf s_j$, for some nonnegative $\alpha_j$'s, then by Lemma \ref{lm-factor-t}, there is some $t>0$ such that $\sum_{j=0}^{n-2}\alpha_j=1+tca^n$. Hence,
\begin{equation*}
    \mathbf s_{n-1}=\sum_{j=0}^{n-2}\alpha_j\mathbf s_j\geq \left(\sum_{j=0}^{n-2}\alpha_j\right)\mathbf s_0=(1+tca^n)\mathbf s_0\geq (1+ca^n)\mathbf s_0.
\end{equation*}
Now, since $d<ca$, we have $d\leq ca-1\leq a^{n-1}(ca-1)$, from which it follows that $\mathbf s_0\geq a^{n-1}$, which is equivalent to $(1+ca^n)\mathbf s_0>\mathbf s_{n-1}$. This gives us a contradiction. 
\end{proof}

Theorem \ref{teo-lower_bound_of_e(S_n)} is precisely our conjecture about the lower bound on the embedding dimension of $e(S_n)$.   

Now we go to our main result about the embedding dimension of $S_n$. After its proof, we show some consequences and examples. We need the following lemma, which is easy to verify. 

\begin{lemma}\label{lm-s_m}
Let $t$ be a positive integer. If $\lambda$ divides $\mathbf s_0$ and $\mathbf s_1$, then 
\begin{equation*}
    \mathbf s_t=\left(\frac{\mathbf s_1}{\lambda}-\mathbf R_t+1\right)\mathbf s_0+\left(\mathbf R_t-\frac{\mathbf s_0}{\lambda}\right)\mathbf s_1.
\end{equation*}
\end{lemma}

Before the next theorem, we prove that $\gcd(S_n)=\gcd(\mathbf s_0, \mathbf s_1)=\gcd(\mathbf s_0, a-1)$. In fact, since $d$ is relatively prime to $a$ and $c$, we have $\mathbf s_0=ca^n-d$ is relatively prime to $a, c$ and $d$. The relation $\mathbf s_1=a\mathbf s_0+(a-1)d$ implies that $\gcd(\mathbf s_0, \mathbf s_1)=\gcd(\mathbf s_0, (a-1)d)=\gcd(\mathbf s_0,a-1)$, where, for the last equality, we use the fact that $\gcd(\mathbf s_0, d)=1$. Now, using the relation $\mathbf s_j=a\mathbf s_{j-1}+(a-1)d$, $j=1,2,\ldots,$ we find that every common divisor of $\mathbf s_0$ and $a-1$ divides $\mathbf s_j$ for all $j\geq 0$. Thus, $\gcd(\mathbf s_0, a-1)$ divides $\gcd(S_n)$. Clearly, $\gcd(S_n)$ divides $\gcd(\mathbf s_0, \mathbf s_1)$, and this ends the proof of our claim. 

\begin{theorem}\label{theo-embedding_dimension}
Let $\lambda=\gcd(S_n)$. Let $m$ be the smallest positive integer such that $\mathbf s_0/\lambda\leq \mathbf R_m$ and assume that $\mathbf s_1/\lambda\geq \mathbf R_m$. Then, $e(S_n)=m$.
\end{theorem}

\begin{proof}
By Theorem \ref{teo-minimalgeneratingsetS_n} we must show that $\mathbf s_m$ can be written as a linear combination of $\mathbf s_0, \ldots, \mathbf s_{m-1}$, with nonnegative integer coefficients, and also that $\mathbf s_{m-1}$ cannot be written as linear combination of $\mathbf s_0,\ldots, \mathbf s_{m-1}$ with nonnegative integer coefficients. In fact, by Lemma \ref{lm-s_m} we have
\begin{equation*}
    \mathbf s_m=\left(\frac{\mathbf s_1}{\lambda}-\mathbf R_m+1\right)\mathbf s_0+\left(\mathbf R_m-\frac{\mathbf s_0}{\lambda}\right)\mathbf s_1, 
\end{equation*}
and by hypothesis, the coefficients in this representation are nonnegative integers. Let $A_0=\left(\mathbf s_1/\lambda\right)-\mathbf R_m+1$ and $B_0=\mathbf R_m-\left(\mathbf s_0/\lambda\right)$, so that $\mathbf s_m=A_0\mathbf s_0+B_0\mathbf s_1$.

Now we have to show that $\mathbf s_{m-1}$ cannot be written as linear combination of $\mathbf s_0,\ldots, \mathbf s_{m-1}$ with nonnegative integer coefficients. We make the following claim: \textit{ For any representation $\mathbf s_m=\beta_0\mathbf s_0+\cdots+\beta_{m-1}\mathbf s_{m-1}$, where $\beta_1, \ldots, \beta_{m-1}$ are nonnegative integers, there is a nonnegative integer $u$ such that 
\begin{equation*}
    \sum_{j=0}^{m-1}\beta_j=1+\left(u+1\right)\frac{(a-1)}{\lambda}ca^n.
\end{equation*}}
To prove the claim, note that by Proposition \ref{prop-coefficient_s_0_and_s_1}, if $\mathbf s_{m}=\beta_0\mathbf s_0+\cdots+\beta_{m-1}\mathbf s_{m-1}$, where the $\beta_j$'s are nonnegative, then we can derive a representation $\mathbf s_m=A\mathbf s_0+B\mathbf s_1$ for some integers $A, B$, and $A+B=\sum_{j=0}^{m-1}\beta_j$. 
By Lemma \ref{lm-factor-t}, there is an integer $t>0$ such that $\sum_{j=0}^{m-1}\beta_j=1+tca^n$, so $A+B=1+tca^n$. Therefore, we have $A\mathbf s_0+B\mathbf s_1=\mathbf s_m=A_0\mathbf s_0+B_0 \mathbf s_1$, so there is an integer $u$ such that $A-A_0=u\mathbf s_1/\lambda$ and $B-B_0=-u\mathbf s_0/\lambda$. Hence, 
\begin{equation*}
    (A+B)-(A_0+B_0)=u\left(\mathbf s_1/\lambda-\mathbf s_0/\lambda\right)=uc[(a-1)/\lambda]a^n.
\end{equation*}
It follows that  
\begin{equation*}
 A+B=uc[(a-1)/\lambda]a^n+(A_0+B_0)=1+(u+1)[(a-1)/\lambda]ca^n, 
\end{equation*}
which implies that $t=(u+1)(a-1)/\lambda$. Since $t\geq 1$, it must be $u\geq 0$. This ends the proof of the claim. 

To continue the proof of the theorem, by contradiction, we assume that there is a representation $\mathbf s_{m-1}=\sum_{j=0}^{m-2}b_j\mathbf s_j$, where the $b_j$'s are nonnegative, and $b_{j_0}>0$ for some $j_0\in\{0,\ldots, m-2\}$. Hence, 
\begin{align*}
    \mathbf s_{m}&=a\mathbf s_{m-1}+(a-1)d\\
    &=\sum_{j=0}^{m-2}(ab_j)\mathbf s_j+(a-1)d\\
    &=\sum_{j=0, j\neq j_0}^{m-2}(ab_j)\mathbf s_j+ab_{j_0}\mathbf s_{j_0}+(a-1)d\\
    &=\sum_{j=0, j\neq j_0}^{m-2}(ab_j)\mathbf s_j+a(b_{j_0}-1)\mathbf s_{j_0}+\mathbf s_{j_0+1}.
\end{align*}
The sum of coefficients in this representation of $\mathbf s_{m}$ is 
\begin{equation*}
    \sum_{j=0, j\neq j_0}^{m-2}(ab_j)+a(b_{j_0}-1)+1=a\sum_{j=0}^{m-2}b_j-a+1.
\end{equation*}
By the claim there exists $u_0\geq 0$ such that $a\sum_{j=0}^{m-2}b_j-a+1=1+(u_0+1)[(a-1)/\lambda]ca^n$. Then, $\sum_{j=0}^{m-2}b_j=1+(u_0+1)[(a-1)/\lambda]ca^{n-1}$. On the other hand, by Lemma \ref{lm-factor-t} we have $\sum_{j=0}^{m-2}b_j=1+t_1ca^n$ for some $t_1>0$, so that $1+(u_0+1)[(a-1)/\lambda]ca^{n-1}=1+t_1ca^n$. It follows that $(u_0+1)[(a-1)/\lambda]=t_1a$; but, since $a$ and $(a-1)/\lambda$ are relatively prime, $a$ must divide $u_0+1$. Then, $u_0+1\geq a$ and $\sum_{j=0}^{m-2}b_j=1+(u_0+1)[(a-1)/\lambda]ca^{n-1}\geq 1+[(a-1)/\lambda]ca^n$.
Therefore, 
\begin{equation*}
    \mathbf s_{m-1}=\sum_{j=0}^{m-2}b_j\mathbf s_j \geq \left(\sum_{j=0}^{m-2}b_j\right)\mathbf s_0=(1+[(a-1)/\lambda]ca^n)\mathbf s_0.
\end{equation*}
Now, by hypothesis we have $\mathbf s_0>\lambda\mathbf R_{m-1}$, which is equivalent to $[(a-1)/\lambda]\mathbf s_0\geq a^{m-1}$, and this is equivalent to $\mathbf s_{m-1}<(1+[(a-1)/\lambda]ca^n)\mathbf s_0$, so we have a contradiction. This ends the proof of Theorem \ref{theo-embedding_dimension}.
\end{proof}

\begin{example}
Let $a\geq 2$ and consider the generating sequence $x_n=a^n-1$. Here, $\lambda=a-1$, and the minimal positive integer $m$ such that $\mathbf s_0/\lambda=(a^n-1)/(a-1)\leq \mathbf R_m=(a^m-1)/(a-1)$, is precisely $m=n$. Clearly, $\mathbf R_n<\mathbf s_1/\lambda=(a^{n+1}-1)/(a-1)$. By Theorem \ref{theo-embedding_dimension}, $e(S_n)=n$.
\end{example}

The condition $\mathbf R_m\leq \mathbf s_1/\lambda$ in Theorem \ref{theo-embedding_dimension} is satisfied in many important cases, as in the following proposition.   

\begin{proposition}\label{prop-case_d>0}
Suppose that $d>0$ and let $\lambda=\gcd(S_n)$. If $m$ is the smallest positive integer such that $\mathbf s_0/\lambda\leq \mathbf R_m$, then $e(S_n)=m$.
\end{proposition}

\begin{proof}
We have to prove that $\mathbf R_m\leq \mathbf s_1/\lambda$. Suppose, on the contrary, that $\lambda\mathbf R_m>\mathbf s_1$. Then, 
\begin{equation*}
    \lambda a^{m-1}=\lambda\frac{a^m-a^{m-1}}{a-1}=\lambda(\mathbf R_m-\mathbf R_{m-1})>\mathbf s_1-\mathbf s_0=(a-1)ca^n.
\end{equation*}
It follows that $a^{m-1}>\frac{a-1}{\lambda}ca^n$, so $\lambda\mathbf R_{m-1}\geq ca^n>ca^n-d=\mathbf s_0$, which is a contradiction.
\end{proof}

\begin{corollary}\label{cor-case_a=2}
Consider a generating sequence of the form $x_n=c2^n-d$, where $c>1$. If $0<d\leq 2^n$ and $k$ is the smallest positive integer such that $c\leq 2^k$, then $e(S_n)=n+k$.
\end{corollary}

\begin{proof}
Here we have $a=2$, so $\lambda=1$, since $\lambda$ divides $a-1$. Write $c=2^k-r$, where $0\leq r<2^{k-1}$. Note that $\mathbf s_0=2^{n+k}-(r+d)\leq 2^{n+k}-1=\mathbf R_{n+k}$. Now, we must show that $\mathbf R_{n+k-1}<\mathbf s_0$. We have $c\geq 2^{k-1}+1$, so 
\begin{equation*}
    \mathbf s_0=c2^n-d\geq 2^{n+k-1}+2^n-d\geq 2^{n+k-1}>\mathbf R_{n+k-1}.
\end{equation*}
This ends the proof. 
\end{proof}

In a similar way we can prove the following. 
\begin{corollary}\label{cor-the_case_a=2}
Consider a generating sequence of the form $x_n=c2^n-1$. If $k$ is the smallest nonnegative integer such that $c\leq 2^k$, then $e(S_n)=n+k$.
\end{corollary}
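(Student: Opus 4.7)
The plan is to derive this as an immediate specialization of Theorem \ref{teo-the_case_a=2} to the case $d=1$. The generating sequence $x_n=c2^n-1$ is precisely the case $d=1$ of $x_n=c2^n-d$, so the hypothesis on $k$ (the smallest nonnegative integer with $c\leq 2^k$) translates verbatim between the two statements.

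The only other hypothesis of Theorem \ref{teo-the_case_a=2} that needs verification is the inequality $d\leq 2^n$. With $d=1$ this reads $1\leq 2^n$, which is trivially satisfied for every $n\geq 1$ (and $n\geq 1$ is the standing convention for the indexing of $S_n$ throughout the paper). Moreover, the paper's standing assumption that $d$ is relatively prime to $a=2$ and $c$ is automatic when $d=1$, so no further checking is needed. Applying Theorem \ref{teo-the_case_a=2} then yields $e(S_n)=n+k$.

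There is no substantive obstacle here; the corollary is a clean specialization and the short proof should just consist of observing the two points above and invoking the theorem. The only aesthetic choice is whether to write out the check $1\leq 2^n$ explicitly or simply note that the hypothesis is trivially satisfied; I would include the one-line verification for the reader's convenience.
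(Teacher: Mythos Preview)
Your proposal is correct and matches the paper's approach: the corollary is stated immediately after Theorem \ref{teo-the_case_a=2} with no separate proof, so the intended argument is exactly the specialization $d=1$ (which trivially satisfies $d\leq 2^n$) that you describe.
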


\begin{example}\label{ex-embedding_dimension_a=2}
\begin{enumerate}
\item Fix $k\geq 2$ and consider the generating sequence $x_n=(2^k-1)2^n-1$. By Corollary \ref{cor-the_case_a=2}, we have $e(S_n)=n+k$ (see \cite{gu}).

\item For $k\geq 1$, consider the generating sequence $x_n=(2^k+1)2^n-(2^k-1)$. In this case, the smallest power of $2$ greater than or equal to $c=2^k+1$ is $2^{k+1}$, and $d=2^k-1$. So, we have $d\leq 2^n$ if and only if $k\leq n$. Thus, by Corollary \ref{cor-case_a=2}, if $k\leq n$, then $e(S_n)=n+k+1$ (see \cite{songthabit}).

\item Let $k\geq 1$ and $x_n=(2^k+1)2^n-(2^k-1)$, where $k>n$. Here, we have $c=2^k+1$. In this case we have $\mathbf R_{n+k-1}<\mathbf s_0\leq \mathbf R_{n+k}$, so by Corollary \ref{cor-case_a=2}, $e(S_n)=n+k$ (see \cite{songthabit}).

\item Let $a\geq 3$, $k\geq 1$, and consider the generating sequence $x_n=(a^k-1)a^n-1$. It is easy to see that, for all $n\geq1$, $\lambda=\gcd(S_n)=1$. We claim that $e(S_n)=n+k+1$. To show this, by Proposition \ref{prop-case_d>0}, we must prove that $\mathbf R_{n+k}<\mathbf s_0\leq \mathbf R_{n+k+1}$. In fact, observe first that 
\begin{equation*}
    \mathbf s_0=a^{n+k}-a^n-1<a^{n+k}+a^{n+k-1}+\cdots+a+1=\mathbf R_{n+k+1}.
\end{equation*}
On the other hand, it is easily seen that $(a-1)\mathbf s_0\geq a^{n+k}$, from which it follows that $\mathbf s_0>(a^{n+k}-1)/(a-1)=\mathbf R_{n+k}$. 
\end{enumerate}
\end{example}

Now we return to the condition $\mathbf s_1/\lambda\geq \mathbf R_{m}$ in Theorem \ref{theo-embedding_dimension}. When $d<0$, the conclusion of Theorem \ref{theo-embedding_dimension} may not be true, for it may happen that $\mathbf R_m>\mathbf s_1/\lambda$. A counterexample can be found by taking $a=2$, $c=1$, $d=-33$, and $n=1$. In this case $m=6$ but $e(S_1)=7$. In the following proposition we give conditions under which $\mathbf R_m\leq \mathbf s_1/\lambda$ is satisfied, although $d<0$. 

\begin{proposition}
Assume $d<0$. Let $k$ and $l$ be the only positive integers such that $\mathbf R_{k-1}< -d\leq \mathbf R_k$ and $\mathbf R_{l-1}<c\leq \mathbf R_l$. If $n\geq k$ and $\gcd(S_n)=1$, then $\mathbf s_0\leq \mathbf R_{n+l}\leq \mathbf s_1$. Moreover, $e(S_n)=n+l$. 
\end{proposition}

\begin{proof}
There are positive integers $d_0$ and $c_0$ such that $
-d=\mathbf R_{k-1}+d_0$ and $c=\mathbf R_{l-1}+c_0$, where $d_0\leq a^{k-1}$ and $c_0\leq a^{l-1}$. Hence, 
\begin{align*}
    \mathbf s_0=ca^n+(-d)=c_0a^n+\sum_{j=0}^{l-2}a^{n+j}+d_0+\sum_{j=0}^{k-2}a^j
    \leq a^{n+l-1}+\sum_{j=0}^{l-2}a^{n+j}+a^{k-1}+\sum_{j=0}^{k-2}a^j.
\end{align*}
If $n\geq k$, then it follows that $\mathbf s_0\leq \mathbf R_{n+l}$. Now, 
\begin{align*}
    \mathbf s_1&=c_0a^{n+1}+\sum_{j=0}^{l-2}a^{n+1+j}+d_0+\sum_{j=0}^{k-2}a^j\\
    &=c_0a^{n+1}+\mathbf R_{n+l}+d_0-a^{k-1}(a^{n-k+1}+\cdots+1)\\
    &=c_0a^{n+1}+\mathbf R_{n+l}+d_0-a^{k-1}\mathbf R_{n-k+2},
\end{align*}
and it is easy to verify that 
\begin{equation*}
    c_0a^{n+1}+d_0\geq a^{n+1}+1\geq a^{k-1}\mathbf R_{n-k+2},
\end{equation*}
so $\mathbf s_1\geq \mathbf R_{n+l}$. It is not difficult to see that $\mathbf s_0>\mathbf R_{n+l-1}$, so we have $e(S_n)=n+l$, by Theorem \ref{theo-embedding_dimension}.
\end{proof}

\section{Residual tuples and the Apéry set \texorpdfstring{${\rm Ap}(S_n,\mathbf s_0)$}{}}

In this section we describe a way of finding the maximum element of ${\rm Ap}(S_n, \mathbf s_0)$, and we show a characterization of the elements of ${\rm Ap}(S_n, \mathbf s_0)$. Throughout this section, we assume the embedding dimension of $S_n$ is at least 2. We start with the following proposition, which is a common result found in \cite{gu*, gu, mersenne, repunit, thabit, songthabit, song}.  Recall that $A(m-1)$ stands for the set of residual $(m-1)-$tuples.  

\begin{proposition}\label{prop-residual}
Let $n$ be a positive integer and $m=e(S_n)$. If $x\in {\rm Ap}(S_n, \mathbf s_0)$, then there exists $(\alpha_{1}, \ldots, \alpha_{m-1})\in A(m-1)$ such that $x=\alpha_{1}\mathbf s_{1}+\cdots+\alpha_{m-1}\mathbf s_{m-1}$.
\end{proposition}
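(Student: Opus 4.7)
The plan is to start from any representation $x=\sum_{j=0}^{m-1}\beta_j\mathbf s_j$ with $\beta_j\in\mathbb N$, which exists since $\{\mathbf s_0,\ldots,\mathbf s_{m-1}\}$ is the minimal generating set of $S_n$ by Theorem~\ref{teo-minimalgeneratingsetS_n}. The first and easy observation is that $\beta_0=0$ in every such representation: otherwise $x-\mathbf s_0=(\beta_0-1)\mathbf s_0+\sum_{j\geq 1}\beta_j\mathbf s_j$ would lie in $S_n$, contradicting $x\in\mathrm{Ap}(S_n,\mathbf s_0)$.

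Among the finitely many representations $x=\sum_{j=1}^{m-1}\beta_j\mathbf s_j$ I would select one that minimizes the weight $W(\beta)=\sum_{j=1}^{m-1}j\beta_j$ (breaking ties co-lexicographically), and claim that the resulting tuple lies in $A(m-1)$. The main engine is the minimal relation $a\mathbf s_i+\mathbf s_j=\mathbf s_{i+1}+a\mathbf s_{j-1}$ of Lemma~\ref{lm-minimalrelation} together with its specialization at $j=i$, namely $(a+1)\mathbf s_i=\mathbf s_{i+1}+a\mathbf s_{i-1}$.

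The bulk of the case analysis runs as follows. \textbf{(a)} If $\beta_i\geq a+1$ for some $i\in\{1,\ldots,m-2\}$, substitute via $(a+1)\mathbf s_i=\mathbf s_{i+1}+a\mathbf s_{i-1}$; for $i\geq 2$ this yields a valid representation (still with $\beta_0'=0$) whose weight changes by $\Delta W=(i+1)+(i-1)a-(a+1)i=1-a<0$, contradicting minimality, while for $i=1$ the substitution produces $\beta_0'=a>0$, contradicting $x\in\mathrm{Ap}$. \textbf{(b)} If $\beta_{i_0}=a$ for some $i_0\in\{2,\ldots,m-2\}$ and some $\beta_{j_0}\geq 1$ with $j_0<i_0$, apply $a\mathbf s_{i_0}+\mathbf s_{j_0}=\mathbf s_{i_0+1}+a\mathbf s_{j_0-1}$; once again the case $j_0=1$ violates $\mathrm{Ap}$ through $\beta_0'=a$, and $j_0\in\{2,\ldots,i_0-1\}$ violates minimality with $\Delta W=(a-1)(j_0-i_0-1)<0$. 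Thus both residual conditions are enforced at every index below $m-1$.

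The genuinely hard part is controlling the coordinate $\beta_{m-1}$, since the analogous substitution $a\mathbf s_{m-1}+\mathbf s_j=\mathbf s_m+a\mathbf s_{j-1}$ involves the non-generator $\mathbf s_m$. Here I would invoke the representation $\mathbf s_m=\sum_{k=0}^{m-1}\gamma_k\mathbf s_k$ supplied by the definition $m=e(S_n)$: substituting it into the identity produces a representation of $x$ in which $\beta_0'=\gamma_0$, so if $\gamma_0\geq 1$ in any available expansion of $\mathbf s_m$ the contradiction with $\mathrm{Ap}$ is immediate. When every expansion of $\mathbf s_m$ has $\gamma_0=0$ (equivalently, $\mathbf s_m\in\mathrm{Ap}(S_n,\mathbf s_0)$), one has to close the case by a weight-decrease estimate, choosing the $\gamma$-representation so that $\sum_k k\gamma_k$ is small and combining this with the divisibility constraint $\sum_k\gamma_k=1+tca^n$ from Lemma~\ref{lm-factor-t}. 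Pinning down this interlocking minimality between the representation of $x$ and the representation of $\mathbf s_m$ is the main obstacle, and I expect the argument will lean on the explicit representations of $\mathbf s_m$ constructed in Section~\ref{sec-representation} to verify that $\gamma_0\geq 1$ in the relevant cases.
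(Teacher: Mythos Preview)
Your approach is genuinely different from the paper's, and the difficulty you flag at the top index~$m-1$ is real for \emph{your} strategy but is a self-inflicted obstacle that the paper's argument simply does not encounter.

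The paper proceeds by strong induction on~$x$. For $x>0$ it sets $j=\min\{i\in\{0,\dots,m-1\}:x-\mathbf s_i\in S_n\}$; since $x\in\mathrm{Ap}(S_n,\mathbf s_0)$ one has $j\geq 1$, and also $x-\mathbf s_j\in\mathrm{Ap}(S_n,\mathbf s_0)$, so induction gives a residual tuple for $x-\mathbf s_j$. Adding~$1$ in the $j$-th slot, the two residual conditions are verified not by weight-juggling but by contradicting the \emph{minimality of~$j$}: if $\alpha_j+1=a+1$ then $(a+1)\mathbf s_j=a\mathbf s_{j-1}+\mathbf s_{j+1}$ forces $x-\mathbf s_{j-1}\in S_n$; if some $\alpha_k=a$ with $k>j$ then $\mathbf s_j+a\mathbf s_k=a\mathbf s_{j-1}+\mathbf s_{k+1}$ again forces $x-\mathbf s_{j-1}\in S_n$; and any nonzero $\alpha_i$ with $i<j$ is ruled out the same way. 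The crucial point is that these checks only require $\mathbf s_{k+1}\in S_n$, which holds for \emph{every} $k\geq 0$, including $k=m-1$: one never needs $\mathbf s_{k+1}$ to be a minimal generator, so the top index is in no way special.

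By contrast, your weight-minimization scheme must stay inside the minimal generating set, and that is exactly why the substitutions at index $m-1$ drag in a rewriting of $\mathbf s_m$ whose weight you cannot control. Your proposed rescue---importing the concrete representations of $\mathbf s_m$ from Section~\ref{sec-representation} to force $\gamma_0\geq 1$---would not yield a proof of the proposition as stated, because those representations are built case-by-case for particular $(a,c,d)$, while Proposition~\ref{prop-residual} is asserted in full generality. (Equivalently, you would need to prove $\mathbf s_m\notin\mathrm{Ap}(S_n,\mathbf s_0)$ in general, which you have not done.) So as written there is a genuine gap at the top coordinate; the cleanest fix is to abandon the weight functional and argue, as the paper does, via membership in $S_n$ together with the minimal-index trick.
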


\begin{proof}
We proceed by induction on $x$. For $x=0$ we have the residual $(m-1)-$tuple $(0,\ldots,0)$ such that $x=0\mathbf s_{1}+\cdots+0\mathbf s_{m-1}$. Suppose that $x>0$ and let $j=\min \{i \in \{0,\ldots, m-1\}: x-\mathbf s_{i}\in S_n\}$. Observe that $j\neq 0$ since $x\in {\rm Ap}(S_n, \mathbf s_0)$. By induction, there exists $(\alpha_1, \ldots, \alpha_{m-1})\in A(m-1)$ such that $x-\mathbf s_{j}=\alpha_1\mathbf s_{1}+\cdots+\alpha_{m-1}\mathbf s_{m-1}$. Hence, $x=\alpha_1\mathbf s_{1}+\cdots+(\alpha_j+1)\mathbf s_{j}+\cdots+\alpha_{m-1}\mathbf s_{m-1}$. To conclude the proof, we only need to show that $(\alpha_1, \ldots, \alpha_j+1,\ldots,\alpha_{m-1})\in A(m-1)$. If $\alpha_j+1=a+1$, then we get $(\alpha_j+1)\mathbf s_{j}=(a+1)\mathbf s_{j}=\mathbf s_{j}+a\mathbf s_{j}=a\mathbf s_{j-1}+\mathbf s_{j+1}$. Since $\mathbf s_{j+1}\in S_n$, we see that $x-\mathbf s_{j-1}\in S_n$, contradicting the minimality of $j$. If there exists $k>j$ such that $\alpha_k=a$, then we obtain that $\mathbf s_{j}+a\mathbf s_{k}=a\mathbf s_{j-1}+\mathbf s_{k+1}$, so $x-\mathbf s_{j-1}\in S_n$, which contradicts the minimality of $j$. Also, by the minimality of $j$ we have $\alpha_1=\cdots=\alpha_{j-1}=0$, and consequently, $(\alpha_1, \ldots \alpha_j+1,\ldots,\alpha_{m-1})\in A(m-1)$.
\end{proof}

\begin{lemma}\label{lm-s_r+1>combination_residual_tuple}
Let $t$ be a positive integer. Then $\mathbf R_{t+1}>\sum_{j=1}^{t}\alpha_j\mathbf R_j$ for all residual $t-$tuples $(\alpha_1, \ldots, \alpha_t)$. 
\end{lemma}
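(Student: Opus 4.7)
The plan is to bound $\sum_{j=1}^{t}\alpha_j\mathbf s_j$ from above by a simple closed-form expression that differs from $\mathbf s_{t+1}$ by a strictly positive quantity. The main tool will be the identity \eqref{eq-general_relation_s_j}, applied with $t+1$ in place of $t$: for every $0\leq u\leq t$,
\begin{equation*}
\mathbf s_{t+1}=(a-1)\sum_{j=u}^{t}\mathbf s_j+\mathbf s_u+(t+1-u)(a-1)d.
\end{equation*}
The choice of $u$ will depend on the structure of the residual tuple. The key combinatorial observation is that in a residual $t$-tuple at most one coordinate $\alpha_{i_0}$ with $i_0\geq 2$ can equal $a$: if $i_0<i_1$ both had value $a$ (with $i_1\geq 2$), then condition (2) in the definition of a residual tuple, applied to $i_1$, would force $\alpha_{i_0}=0$.

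From this observation I would split into two cases. In \textbf{Case 1}, assume no coordinate $\alpha_{i_0}$ with $i_0\geq 2$ equals $a$; then $\alpha_1\leq a$ and $\alpha_j\leq a-1$ for $2\leq j\leq t$, so
\begin{equation*}
\sum_{j=1}^{t}\alpha_j\mathbf s_j\leq a\mathbf s_1+(a-1)\sum_{j=2}^{t}\mathbf s_j=\mathbf s_{t+1}-t(a-1)d<\mathbf s_{t+1},
\end{equation*}
where the middle equality is the formula above with $u=1$. In \textbf{Case 2}, there is a (unique) $i_0\in\{2,\ldots,t\}$ with $\alpha_{i_0}=a$. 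By condition (2), $\alpha_i=0$ for $i<i_0$, and by the observation above, $\alpha_j\leq a-1$ for $j>i_0$. Hence
\begin{equation*}
\sum_{j=1}^{t}\alpha_j\mathbf s_j\leq a\mathbf s_{i_0}+(a-1)\sum_{j=i_0+1}^{t}\mathbf s_j=\mathbf s_{t+1}-(t+1-i_0)(a-1)d<\mathbf s_{t+1},
\end{equation*}
applying the formula with $u=i_0$ and noting $t+1-i_0\geq 1$.

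The argument is short and the only mildly subtle step is the combinatorial observation that at most one index $\geq 2$ carries the value $a$; once that is in hand, the identity \eqref{eq-general_relation_s_j} does the rest automatically, producing exactly the desired positive slack $(t-u+1)(a-1)d>0$.
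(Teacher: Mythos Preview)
Your proof is correct and follows essentially the same approach as the paper's own argument. The paper also splits according to whether some coordinate equals $a$, obtains the same upper bound $\mathbf s_{j_0}+(a-1)\sum_{j=j_0}^{t}\mathbf s_j$, and leaves the case ``all $\alpha_j<a$'' as a similar computation; the only cosmetic differences are that you invoke identity \eqref{eq-general_relation_s_j} directly instead of expanding $\mathbf s_j=ca^{n+j}-d$, and your case split absorbs the possibility $\alpha_1=a$ into Case~1 so that both cases are handled explicitly.
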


\begin{proof}
Let $(\alpha_1,\ldots, \alpha_{t})$ be a residual $t-$tuple. If $\alpha_{j_0}=a$ for some $j_0\in\{1,\ldots, t\}$, then 
\begin{align*}
   \sum_{j=1}^{t}\alpha_j\mathbf R_j&\leq \mathbf R_{j_0}+(a-1)\sum_{j=j_0}^{t}\mathbf R_j\\
   &\leq \mathbf R_{j_0}+\left(\sum_{j=j_0}^{t}a^{j}-(t+1-j_0)\right)\\
   &=\frac{a^{j_0}-1}{a-1}+\left(a^{j_0}\mathbf R_{t+1-j_0}-(t+1-j_0)\right)\\
   &=\frac{1}{a-1}\left(a^{j_0}-1+a^{j_0}\left(a^{t+1-j_0}-1\right)-(a-1)(t+1-j_0)\right)\\
   &=\frac{1}{a-1}\left(a^{t+1}-1-(a-1)(t+1-j_0)\right)\\
   &=\mathbf R_{t+1}-(t+1-j_0)\\
   &<\mathbf R_{t+1}
\end{align*}
In a similar way it can be shown that if $\alpha_j<a$ for all $j$, then $\sum_{j=1}^{t}\alpha_j\mathbf R_j<\mathbf R_{t+1}$. 
\end{proof}

Now, we prove some properties of residual tuples. Recall the colexicographic order $\leq_c$ on the set of residual $r-$tuples $A(r)$: $(\alpha_1, \ldots, \alpha_r)\leq_c(\beta_1, \ldots, \beta_r)$ if and only if $(\alpha_1, \ldots, \alpha_r)=(\beta_1, \ldots, \beta_r)$ or there exists $t\in \{1,\ldots, t\}$ such that $\alpha_t<\beta_t$ and $\alpha_i=\beta_i$ for all $i>t$. 

\begin{proposition}
If $(\alpha_1, \ldots, \alpha_{r})$ and $(\beta_1,\ldots, \beta_{r})$ are residual $r-$ tuples, then $(\alpha_1, \ldots, \alpha_{r})<_c(\beta_1,\ldots, \beta_{r})$ if and only if $\sum_{j=1}^{r}\alpha_j\mathbf R_j< \sum_{j=1}^{r}\beta_j\mathbf R_j$.
\end{proposition}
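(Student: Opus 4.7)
The plan is to prove the forward direction directly using Lemma \ref{lm-s_r+1>combination_residual_tuple}, and obtain the converse immediately by trichotomy, since $\leq_c$ is a total order on $A(r)$ and a strict order cannot simultaneously fail in both directions.

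For the forward direction, assume $(\alpha_1, \ldots, \alpha_r)<_c(\beta_1, \ldots, \beta_r)$ and let $t\in\{1,\ldots,r\}$ be the (necessarily unique) index such that $\alpha_t<\beta_t$ and $\alpha_j=\beta_j$ for all $j>t$. I would then write
\[
\sum_{j=1}^{r}\beta_j\mathbf s_j-\sum_{j=1}^{r}\alpha_j\mathbf s_j \;=\; \sum_{j=1}^{t}(\beta_j-\alpha_j)\mathbf s_j \;\geq\; \mathbf s_t-\sum_{j=1}^{t-1}\alpha_j\mathbf s_j,
\]
using $\beta_t-\alpha_t\geq 1$ and $\beta_j\geq 0$ for $j<t$. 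For $t=1$ the remainder sum is empty and the difference is already $(\beta_1-\alpha_1)\mathbf s_1>0$. For $t\geq 2$, the key step is to observe that the truncation $(\alpha_1,\ldots,\alpha_{t-1})$ is itself a residual $(t-1)$-tuple, since the defining conditions for residual tuples only constrain indices strictly below any position where an entry equals $a$, and so they are preserved under dropping tail entries. Lemma \ref{lm-s_r+1>combination_residual_tuple} applied with parameter $t-1$ then yields $\mathbf s_t>\sum_{j=1}^{t-1}\alpha_j\mathbf s_j$, making the displayed difference strictly positive.

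For the converse, exactly one of the three alternatives $(\alpha_1,\ldots,\alpha_r)=(\beta_1,\ldots,\beta_r)$, $(\alpha_1,\ldots,\alpha_r)<_c(\beta_1,\ldots,\beta_r)$, or $(\beta_1,\ldots,\beta_r)<_c(\alpha_1,\ldots,\alpha_r)$ holds. Equality gives $\sum_{j=1}^{r}\alpha_j\mathbf s_j=\sum_{j=1}^{r}\beta_j\mathbf s_j$, while the third alternative yields $\sum_{j=1}^{r}\beta_j\mathbf s_j<\sum_{j=1}^{r}\alpha_j\mathbf s_j$ by the forward direction already established; both contradict the hypothesis, so $(\alpha_1,\ldots,\alpha_r)<_c(\beta_1,\ldots,\beta_r)$.

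The only subtle point is the truncation observation feeding into Lemma \ref{lm-s_r+1>combination_residual_tuple}, but this is essentially automatic, so there is no substantive obstacle; the argument is really a one-line comparison once the correct index $t$ is isolated.
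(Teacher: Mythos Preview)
Your proof is correct and follows essentially the same approach as the paper: both isolate the index $t$, apply Lemma~\ref{lm-s_r+1>combination_residual_tuple} to the truncated tuple $(\alpha_1,\ldots,\alpha_{t-1})$ to obtain $\mathbf s_t>\sum_{j=1}^{t-1}\alpha_j\mathbf s_j$, and then use trichotomy for the converse. If anything, you are slightly more careful in explicitly noting that the truncation is again residual, a point the paper leaves implicit.
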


\begin{proof}
If $(\alpha_1, \ldots, \alpha_{r})<_c(\beta_1,\ldots, \beta_{r})$, then there is some $t\in \{1,\ldots, r\}$ such that $\alpha_t<\beta_t$ and $\alpha_i=\beta_i$ for $i=t+1, \ldots, r$. Thus,  
\begin{align*}
\sum_{j=1}^{r}\alpha_j\mathbf R_j&=\sum_{j=1}^{t-1}\alpha_j\mathbf R_j+\alpha_t\mathbf R_t+\sum_{j=t+1}^{r}\alpha_j\mathbf R_j\\
&<\mathbf R_t+\alpha_t\mathbf R_t+\sum_{j=t+1}^{r}\alpha_j\mathbf R_j\\
&\leq \beta_t\mathbf R_t+\sum_{j=t+1}^{r}\beta_j\mathbf R_j\\
&\leq \sum_{j=1}^{r}\beta_j\mathbf R_j
\end{align*}
In the second line above we applied Lemma \ref{lm-s_r+1>combination_residual_tuple}. For the converse, suppose that $\sum_{j=1}^{r}\alpha_j\mathbf R_j< \sum_{j=1}^{r}\beta_j\mathbf R_j$. Clearly, it cannot be $(\alpha_1, \ldots, \alpha_{r})=(\beta_1,\ldots, \beta_{r})$. Now, if $(\alpha_1, \ldots, \alpha_{r})>(\beta_1,\ldots, \beta_{r})$, then it would follow that $\sum_{j=1}^{r}\alpha_j\mathbf R_j>\sum_{j=1}^{r}\beta_j\mathbf R_j$, which is a contradiction. This ends the proof. 
\end{proof}

\begin{corollary}
For all $r\geq 1$, 
\begin{equation*}
    \left\{\sum_{j=1}^{r}\alpha_j\mathbf R_j:(\alpha_1,\ldots, \alpha_r)\in A(r)\right\}=\{0,1,\ldots, \mathbf R_{r+1}-1\}.
\end{equation*}
Moreover, there is an isomorphism of partially ordered sets $\varphi:A(r)\to \{0,\ldots, \mathbf R_{r+1}-1\}$ given by 
\begin{equation*}
    \varphi(\alpha_1, \ldots, \alpha_r)=\sum_{j=1}^r\alpha_j\mathbf R_j,
\end{equation*}
for all residual $r-$tuple $(\alpha_1, \ldots, \alpha_r)$. 
\end{corollary}

\begin{proof}
The minimum and maximum residual $r-$tuples with respect to the co-lexicographic order are $(0,\ldots, 0)$ and $(0,\ldots, 0,a)$, respectively; these $r-$ tuples correspond to the integers $0$ and $a\mathbf R_r=\mathbf R_{r+1}-1$. So, 
\begin{equation*}
\left\{\sum_{j=1}^{r}\alpha_j\mathbf R_j:(\alpha_1,\ldots, \alpha_r)\in A(r)\right\}\subseteq\{0,\ldots, \mathbf R_{r+1}-1\}.    
\end{equation*}
Since both sets have $\mathbf R_{r+1}$ elements, they are equal. 
\end{proof}

\begin{corollary}\label{cor-representation_residual}
Let $r\geq 1$. For each integer $t$ such that $0\leq t<\mathbf R_{r+1}$, there exists a unique residual $r-$tuple $(\alpha_1, \ldots, \alpha_r)$ such that $t=\sum_{j=1}^{r}\alpha_j\mathbf R_j$. 
\end{corollary}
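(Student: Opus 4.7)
The plan is to derive this corollary as an immediate consequence of the preceding corollary together with a simple cardinality argument, so there is no real obstacle to surmount. The preceding corollary already establishes the set equality
\[
\left\{\sum_{j=1}^{r}\alpha_j\mathbf R_j:(\alpha_1,\ldots, \alpha_r)\in A(r)\right\}=\{0,1,\ldots, \mathbf R_{r+1}-1\},
\]
which gives existence directly: for any $t$ in the required range there is at least one residual $r$-tuple $(\alpha_1,\ldots,\alpha_r)\in A(r)$ with $t=\sum_{j=1}^{r}\alpha_j\mathbf R_j$.

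For uniqueness, I would invoke the cardinality count stated in the introduction, namely $|A(r)|=(a^{r+1}-1)/(a-1)=\mathbf R_{r+1}$. Since the target set $\{0,1,\ldots,\mathbf R_{r+1}-1\}$ also has cardinality $\mathbf R_{r+1}$, the surjective map $A(r)\to \{0,1,\ldots,\mathbf R_{r+1}-1\}$ sending $(\alpha_1,\ldots,\alpha_r)\mapsto \sum_{j=1}^{r}\alpha_j\mathbf R_j$ is a surjection between finite sets of the same cardinality, hence a bijection. Therefore each $t$ in the range has exactly one preimage, which is exactly the uniqueness claim.

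An alternative, more self-contained route would be to argue uniqueness directly: if two distinct residual $r$-tuples produced the same value, let $t$ be the largest index where they differ and apply the $\mathbf R$-analogue of the argument in the proposition preceding Corollary~\ref{cor-representation_residual} (using $\mathbf R_{t+1}>\sum_{j=1}^{t}\alpha_j\mathbf R_j$ for any residual $t$-tuple, which follows from $a\mathbf R_t=\mathbf R_{t+1}-1$) to derive a strict inequality between the two sums, a contradiction. However, the counting argument above is shorter and uses only results already recorded, so that is the route I would take.
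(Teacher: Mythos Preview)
Your argument is correct and matches the paper's intent: the paper gives no separate proof of this corollary, treating it as immediate from the preceding set equality together with the fact (recorded in the introduction) that $|A(r)|=\mathbf R_{r+1}$, which is exactly the counting argument you spell out. Nothing further is needed.
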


Let $(\alpha_1, \ldots, \alpha_r)\neq (0,\ldots, 0,a)$ be a residual $r-$tuple. We want to find the immediate successor of $(\alpha_1, \ldots, \alpha_r)$ in $A(r)$. To do this, we just have to represent $1+\sum_{j=1}^r\alpha_j\mathbf R_j$ in the form $\sum_{j=1}^r\beta_j\mathbf R_j$, and so $(\beta_1, \ldots, \beta_r)$ is the immediate successor of $(\alpha_1,\ldots, \alpha_r)$. We have two cases. If $\alpha_i=a$ for some $i<r$, then  
\begin{align*}
    1+\sum_{j=1}^r\alpha_j\mathbf R_j&=1+a\mathbf R_i+\sum_{j=i+1}^r\alpha_j\mathbf R_j\\
    &=\mathbf R_{i+1}+\sum_{j=i+1}^r\alpha_j\mathbf R_j\\
    &=(1+\alpha_{i+1})\mathbf R_{i+1}+\sum_{j=i+2}^r\alpha_j\mathbf R_j. 
\end{align*}
Therefore, the immediate successor of $(0,\ldots,0,a, \alpha_{i+1},\ldots, \alpha_r)$ is $(0,\ldots,0,0,1+\alpha_{i+1},\ldots, \alpha_r)$. Similarly, if $\alpha_j<a$ for all $j$, then the immediate successor of $(\alpha_1,\ldots, \alpha_r)$ is $(1+\alpha_1,\ldots, \alpha_r)$. 

\begin{lemma}\label{lm-size_residual_tuples_less_than_(alpha_j)}
Let $(\alpha_1, \ldots, \alpha_r)$ be a residual $r-$tuple. Then, the number of residual $r-$tuples less than or equal to $(\alpha_1, \ldots, \alpha_r)$ with respect to the colexicographic order is 
\begin{equation*}
    1+\sum_{j=1}^r\alpha_j\mathbf R_j.
\end{equation*}
\end{lemma}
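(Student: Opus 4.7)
The plan is to reduce the count to counting integers via an order isomorphism. By Corollary~\ref{cor-representation_residual} (together with the preceding corollary identifying the range), the map $\Phi\colon A(r)\to\{0,1,\ldots,\mathbf R_{r+1}-1\}$ defined by $\Phi(\alpha_1,\ldots,\alpha_r)=\sum_{j=1}^{r}\alpha_j\mathbf R_j$ is a bijection. The key step is to show that $\Phi$ is strictly order-preserving when $A(r)$ carries $\leq_c$ and $\{0,1,\ldots,\mathbf R_{r+1}-1\}$ carries the natural order. Once that is established, $\Phi$ is automatically an order isomorphism (two total orders on finite sets of the same cardinality, related by an order-preserving bijection). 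The number of residual $r$-tuples that are $\leq_c(\alpha_1,\ldots,\alpha_r)$ then equals the number of nonnegative integers $\leq\Phi(\alpha_1,\ldots,\alpha_r)$, namely $1+\sum_{j=1}^{r}\alpha_j\mathbf R_j$, as claimed.

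To prove the order-preserving property, I would mimic almost verbatim the proof of the proposition stated just before Corollary~\ref{cor-representation_residual} (the one for $\mathbf s_j$), replacing $\mathbf s_j$ by $\mathbf R_j$. Assume $(\alpha_1,\ldots,\alpha_r)<_c(\beta_1,\ldots,\beta_r)$ and pick $t\in\{1,\ldots,r\}$ with $\alpha_t<\beta_t$ and $\alpha_i=\beta_i$ for $i>t$. Observe that the truncation $(\alpha_1,\ldots,\alpha_{t-1})$ is itself a residual $(t-1)$-tuple (the defining condition is inherited), so the preceding corollary yields $\sum_{j=1}^{t-1}\alpha_j\mathbf R_j\leq\mathbf R_t-1$. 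Combining with $\alpha_t+1\leq\beta_t$ and $\alpha_i=\beta_i$ for $i>t$, one obtains
\begin{align*}
\sum_{j=1}^{r}\alpha_j\mathbf R_j
&\leq(\mathbf R_t-1)+\alpha_t\mathbf R_t+\sum_{j=t+1}^{r}\alpha_j\mathbf R_j\\
&=(\alpha_t+1)\mathbf R_t-1+\sum_{j=t+1}^{r}\beta_j\mathbf R_j\\
&\leq\beta_t\mathbf R_t-1+\sum_{j=t+1}^{r}\beta_j\mathbf R_j\\
&<\sum_{j=1}^{r}\beta_j\mathbf R_j.
\end{align*}

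There is no genuine obstacle here: the analogue of Lemma~\ref{lm-s_r+1>combination_residual_tuple} needed for $\mathbf R_j$ is already packaged in the corollary giving the range of $\Phi$, and the rest is bookkeeping. The one minor wrinkle is the boundary case $t=1$, where the truncated tuple is empty and the corresponding sum is $0<\mathbf R_1=1$; the argument goes through unchanged.
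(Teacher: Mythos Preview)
Your proof is correct, but it takes a different route than the paper's. The paper proves the lemma by induction on $r$: for the inductive step it splits the set $B_{r+1}$ of residual $(r+1)$-tuples $\leq_c(\alpha_1,\ldots,\alpha_{r+1})$ into those with last coordinate $<\alpha_{r+1}$ (there are $\alpha_{r+1}\mathbf R_{r+1}$ of these, since the first $r$ coordinates range freely over $A(r)$) and those with last coordinate $=\alpha_{r+1}$ (in bijection with $B_r$, counted by the inductive hypothesis). Your approach instead uses the already-established bijection $\Phi\colon A(r)\to\{0,\ldots,\mathbf R_{r+1}-1\}$, shows it is order-preserving by mimicking the argument for $\mathbf s_j$, and reads off the count from the image interval. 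Your argument is arguably cleaner and makes explicit the order-isomorphism underlying the whole setup; the paper's inductive decomposition, on the other hand, yields as a byproduct a recursive description of the tuples $\leq_c(\alpha_1,\ldots,\alpha_r)$, which the paper uses immediately afterwards in describing ${\rm Ap}(S_n,\mathbf s_0)$.
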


\begin{proof}
Let $B_r$ be the set of $r-$tuples less than or equal to $(\alpha_1, \ldots, \alpha_r)$ with respect to the co-lexicographic order. If $r=1$, then $|B_1|=1+\alpha_1=1+\alpha_1\mathbf R_1$. Assume the result is true for $r\geq 1$. Let $(\alpha_1, \ldots, \alpha_{r+1})$ be a residual $(r+1)-$tuple. The set $B_{r+1}$ is the disjoint union of subsets $C_1$ and $C_2$, where $C_1$ is the set of all residual $(r+1)-$tuples $(\beta_1, \ldots, \beta_{r+1})$ such that $\beta_{r+1}<\alpha_{r+1}$ and $C_2$ is the set of all residual $(r+1)-$tuples $(\beta_1, \ldots, \beta_{r+1})$ such that $\beta_{r+1}=\alpha_{r+1}$. Each element in $C_1$ can be represented uniquely in the form $(\beta_1, \ldots, \beta_{r+1})$, where $0\leq \beta_{r+1}<\alpha_{r+1}$ and $(\beta_1, \ldots, \beta_{r})$ is a residual $r-$tuple, so the set $C_1$ has $\alpha_{r+1}\mathbf R_{r+1}$ elements.

Let us count the number of elements in $C_2$. If $\alpha_{r+1}=a$, then $C_2$ only contains the $(r+1)-$tuple, $(0,\ldots, 0, a)$. In this case, the set $B_r$ only contains the $r-$tuple, $(0,\ldots, 0)$. Now, if $\alpha_{r+1}<a$, then the elements in $C_2$ are representable uniquely in the form $(\beta_1, \ldots, \beta_r, \alpha_{r+1})$, where $(\beta_1, \ldots, \beta_r)\in B_r$. In any case, $C_2$ has the same number of elements as $B_r$. This number is $1+\sum_{j=1}^r\alpha_j\mathbf R_j$, by the induction hypothesis. Thus, $B_{r+1}$ has $1+\sum_{j=1}^{r+1}\alpha_j\mathbf R_j$ elements.
\end{proof}

Now, we give our characterization of the Apéry set ${\rm Ap}(S_n, \mathbf s_0)$ in terms of residual $r-$tuples. By Proposition \ref{prop-residual}, if $m=e(S_n)$, then $\mathbf s_0=|{\rm Ap}(S_n,\mathbf s_0)|\leq \mathbf R_m$. So, if $\lambda=\gcd(S_n)$, then $\mathbf s_0/\lambda\leq \mathbf R_m$. Thus, $0\leq\mathbf s_0/\lambda-1<\mathbf R_m$ and, by Corollary \ref{cor-representation_residual}, there is a unique residual $(m-1)-$tuple $(\alpha_1, \ldots, \alpha_{m-1})$ such that $\mathbf s_0/\lambda=1+\sum_{j=1}^{m-1}\alpha_j\mathbf R_j$.  

\begin{theorem}\label{teo-Apery_set_characterization}
Let $m=e(S_n)$, $\lambda=\gcd(S_n)$, $(\alpha_1, \ldots, \alpha_{m-1})$ be the unique residual $(m-1)-$tuple such that $\mathbf s_0/\lambda=1+\sum_{j=1}^{m-1}\alpha_j\mathbf R_j$, and assume that $\mathbf R_m\leq \mathbf s_1/\lambda$. Then, ${\rm Ap}(S_n, \mathbf s_0)$ coincides with the following set
\begin{equation}\label{eq-apery_residual}
\left\{\sum_{j=1}^{m-1}\beta_j\mathbf s_j: (\beta_1,\ldots,\beta_{m-1})\in A(m-1), (\beta_1,\ldots,\beta_{m-1})\leq_c(\alpha_1,\ldots,\alpha_{m-1})\right\}.
\end{equation}
\end{theorem}

\begin{proof}
Let us denote by $T(m)$ the set in \eqref{eq-apery_residual}. The size of $T(m)$ is precisely $1+\sum_{j=1}^{m-1}\alpha_j\mathbf R_{j}=\mathbf s_0/\lambda$. For any $(m-1)-$tuple $(\beta_1, \ldots, \beta_{m-1})$, where $\beta_1, \ldots, \beta_{m-1}$ are integers, we have 
\begin{equation*}
\sum_{j=1}^{m-1}\beta_j\mathbf s_j\equiv (a-1)d\sum_{j=1}^{m-1}\beta_j\mathbf R_j\ ({\rm mod}\ \mathbf s_0). 
\end{equation*}
Since $\lambda=\gcd(S_n)=\gcd(\mathbf s_0, a-1)$, we get
\begin{equation*}
\sum_{j=1}^{m-1}\beta_j\left(\mathbf s_j/\lambda\right)\equiv ((a-1)/\lambda)d\sum_{j=1}^{m-1}\beta_j\mathbf R_j\ ({\rm mod}\ \mathbf s_0/\lambda).
\end{equation*}
Recall that $(1/\lambda)S_n$ is a numerical semigroup and the Apéry set of $(1/\lambda)\mathbf s_0$ is given by ${\rm Ap}((1/\lambda)S_n, (1/\lambda)\mathbf s_0)=(1/\lambda){\rm Ap}(S_n, \mathbf s_0)$. 

Now, when $(\beta_1, \ldots, \beta_{m-1})$ runs over the set of residual $(m-1)-$tuples less than or equal to $(\alpha_1, \ldots, \alpha_{m-1})$ with respect to the colexicographic order, we obtain $\mathbf s_0/\lambda$ integers of the form $\sum_{j=1}^{m-1}\beta_j(\mathbf s_j/\lambda)$ that are mutually incongruent modulo $\mathbf s_0/\lambda$. In fact, if $(\beta_1, \ldots, \beta_{m-1})$ and $(\gamma_1, \ldots, \gamma_{m-1})$ are less than or equal to $(\alpha_1, \ldots, \alpha_{m-1})$ and $\sum_{j=1}^{m-1}\beta_j(\mathbf s_j/\lambda)\equiv\sum_{j=1}^{m-1}\gamma_j(\mathbf s_j/\lambda)\ ({\rm mod}\ \mathbf s_0/\lambda)$, then 
\begin{equation*}
((a-1)/\lambda)d\sum_{j=1}^{m-1}\beta_j\mathbf R_j\equiv ((a-1)/\lambda)d\sum_{j=1}^{m-1}\gamma_j\mathbf R_j\ ({\rm mod}\ \mathbf s_0/\lambda).
\end{equation*}
Since $\mathbf s_0/\lambda$ is relatively prime to $(a-1)/\lambda$ and $d$, we obtain 
\begin{equation*}
\sum_{j=1}^{m-1}\beta_j\mathbf R_j\equiv \sum_{j=1}^{m-1}\gamma_j\mathbf R_j\ ({\rm mod}\ \mathbf s_0/\lambda).
\end{equation*}
Now, both $\sum_{j=1}^{m-1}\beta_j\mathbf R_j$ and $\sum_{j=1}^{m-1}\gamma_j\mathbf R_j$ are less than or equal to $\sum_{j=1}^{m-1}\alpha_j\mathbf R_j=\mathbf s_0/\lambda-1$, so $\sum_{j=1}^{m-1}\beta_j\mathbf R_j=\sum_{j=1}^{m-1}\gamma_j\mathbf R_j$. Hence, $(\beta_1, \ldots, \beta_{m-1})=(\gamma_1, \ldots, \gamma_{m-1})$. We have shown that the set of integers $\sum_{j=1}^{m-1}\beta_j(\mathbf s_j/\lambda)$, where $(\beta_1, \ldots, \beta_{m-1})\leq_c(\alpha_1, \ldots,\alpha_{m-1})$, form a complete residue system modulo $\mathbf s_0/\lambda$.

To show that $(1/\lambda){\rm Ap}(S_n, \mathbf s_0)\subseteq (1/\lambda)T(m)$, let $s\in (1/\lambda){\rm Ap}(S_n, \mathbf s_0)$. By Proposition \ref{prop-residual}, there is a residual $(m-1)-$tuple $(\gamma_1, \ldots, \gamma_{m-1})$ such that $s=\sum_{j=1}^{m-1}\gamma_j(\mathbf s_j/\lambda)$. We want to show that $(\gamma_1, \ldots, \gamma_{m-1})\leq_c(\alpha_1, \ldots, \alpha_{m-1})$. Suppose, on the contrary, that $(\gamma_1, \ldots, \gamma_{m-1})>_c(\alpha_1, \ldots, \alpha_{m-1})$. There exists an $(m-1)-$tuple $(\beta_1, \ldots, \beta_{m-1})$ less than or equal to $(\alpha_1, \ldots, \alpha_{m-1})$ such that 
\begin{equation*}
 s\equiv \sum_{j=1}^{m-1}\beta_j(\mathbf s_j/\lambda)\ ({\rm mod}\ \mathbf s_0/\lambda).
\end{equation*}
If we show that $s=\sum_{j=1}^{m-1}\gamma_j(\mathbf s_j/\lambda)>\sum_{j=1}^{m-1}\beta_j(\mathbf s_j/\lambda)$, we would have a contradiction, since $s\in{\rm Ap}((1/\lambda)S_n, \mathbf s_0/\lambda)$ cannot be congruent modulo $\mathbf s_0/\lambda$ to an element of $(1/\lambda)S_n$ less than $s$. 

Now, if $(\delta_1, \ldots, \delta_{m-1})$ is the immediate successor of $(\beta_1, \ldots, \beta_{m-1})$, then 
\begin{equation*}
    \sum_{j=1}^{m-1}\delta_j(\mathbf s_j/\lambda)-\sum_{j=1}^{m-1}\beta_j(\mathbf s_j/\lambda)=\begin{cases}
    \mathbf s_1/\lambda,&\text{if $\alpha_i<a$ for all $i$};\\
    (\mathbf s_{i+1}/\lambda)-a\mathbf (s_{i}/\lambda),&\text{if $\alpha_i=a$ for some $i$}.
    \end{cases}
\end{equation*}
Note that $(\mathbf s_{i+1}/\lambda)-a(\mathbf s_i/\lambda)=(a-1)d/\lambda=(\mathbf s_1/\lambda)-a(\mathbf s_0/\lambda)$, so 
\begin{equation*}
    \sum_{j=1}^{m-1}\delta_j(\mathbf s_j/\lambda)-\sum_{j=1}^{m-1}\beta_j(\mathbf s_j/\lambda)=\begin{cases}
    \mathbf s_1/\lambda,&\text{if $\alpha_i<a$ for all $i$};\\
    (\mathbf s_{1}/\lambda)-a(\mathbf s_{0}/\lambda),&\text{if $\alpha_i=a$ for some $i$}.
    \end{cases}
\end{equation*}
Let $t$ be the number of residual $r-$tuples $(\delta_1,\ldots,\delta_{m-1})$ such that 
\begin{equation*}
    (\beta_1,\ldots,\beta_{m-1})\leq_c(\delta_1,\ldots,\delta_{m-1})<_c(\gamma_1, \ldots, \gamma_{m-1}),
\end{equation*}
and let $u$ be the number of such $r-$tuples $(\delta_1,\ldots,\delta_{m-1})$ such that $\delta_i=a$ for some $i$. Then, 
\begin{equation*}
    \sum_{j=1}^{m-1}\gamma_j(\mathbf s_j/\lambda)-\sum_{j=1}^{m-1}\beta_j(\mathbf s_j/\lambda)=\frac{(t-u)\mathbf s_1+u(\mathbf s_1-a\mathbf s_0)}{\lambda}=\frac{t\mathbf s_1-au\mathbf s_0}{\lambda}.
\end{equation*}
Since $\mathbf s_0/\lambda$ divides $t\mathbf s_1-au\mathbf s_0$, and $\mathbf s_0/\lambda$ and $\mathbf s_1/\lambda$ are relatively prime, we see that $\mathbf s_0/\lambda$ divides $t$. So, we have $t\geq \mathbf s_0/\lambda$. It is not difficult to show that the number of residual $(m-1)-$tuples that have an entry equal to $a$ is precisely $\mathbf R_{m-1}$. Hence, $u\leq \mathbf R_{m-1}$, and we get 
\begin{equation*}
     ts_1-au\mathbf s_0\geq \mathbf (s_0/\lambda)\mathbf s_1-au\mathbf s_0=\mathbf s_0(\mathbf s_1/\lambda-au)\geq \mathbf s_0(\mathbf R_m-a\mathbf R_{m-1})=\mathbf s_0>0.
\end{equation*}
We conclude that $ \sum_{j=1}^r\gamma_j(\mathbf s_j/\lambda)>\sum_{j=1}^r\beta_j(\mathbf s_j/\lambda)$, which is what we wanted to reach a contradiction. 

We conclude that $(1/\lambda){\rm Ap}(S_n, \mathbf s_0)\subseteq (1/\lambda)T(m)$, and since both sets have $\mathbf s_0/\lambda$ elements, it results that $(1/\lambda){\rm Ap}(S_n, \mathbf s_0)=(1/\lambda)T(m)$ and ${\rm Ap}(S_n, \mathbf s_0)=T(m)$. 
\end{proof}

\section{The Frobenius number}
The following result is a consequence of Theorem \ref{teo-Apery_set_characterization}; it allows us to compute the Frobenius number of $S_n$ under the condition $d>0$. 

\begin{corollary}\label{cor-max_apery}
Assume $d>0$. If $m=e(S_n)$, $\lambda=\gcd(S_n)$ and $(\alpha_1, \ldots, \alpha_{m-1})$ is the unique residual $(m-1)-$tuple such that $\mathbf s_0/\lambda=1+\sum_{j=1}^{m-1}\alpha_j\mathbf R_j$, then
\begin{equation*}
    \max {\rm Ap}(S_n,\mathbf s_0)=\sum_{j=1}^{m-1}\alpha_j\mathbf s_j.
\end{equation*}
Moreover, for the numerical semigroup $(1/\lambda)S_n$, we have 
\begin{equation*}
    {\rm F}((1/\lambda)S_n)=\sum_{j=1}^{m-1}\alpha_j(\mathbf s_j/\lambda)-\mathbf s_0/\lambda.
\end{equation*}
\end{corollary}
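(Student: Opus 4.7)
The plan is to chain together three results already established in the excerpt: the characterization of ${\rm Ap}(S_n,\mathbf s_0)$ in Proposition \ref{prop-Apery_set_characterization}, the fact that the co-lexicographic order on residual tuples agrees with the natural order on the corresponding sums $\sum \beta_j \mathbf s_j$ (proved just before the corollary), and the standard identity ${\rm F}(S)=\max {\rm Ap}(S,x)-x$ combined with the scaling relation ${\rm Ap}(M,x)=\{ds:s\in {\rm Ap}((1/d)M, x/d)\}$ from the introduction.

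For the first claim, I would invoke Proposition \ref{prop-Apery_set_characterization} to write every element of ${\rm Ap}(S_n,\mathbf s_0)$ uniquely in the form $\sum_{j=1}^{m-1}\beta_j\mathbf s_j$ for some residual $(m-1)$-tuple $(\beta_1,\ldots,\beta_{m-1})\leq_c(\alpha_1,\ldots,\alpha_{m-1})$. Then the order-preserving proposition gives immediately that the supremum over such $(\beta_1,\ldots,\beta_{m-1})$ is attained at the maximum tuple, namely $(\alpha_1,\ldots,\alpha_{m-1})$ itself, yielding $\max {\rm Ap}(S_n,\mathbf s_0)=\sum_{j=1}^{m-1}\alpha_j\mathbf s_j$.

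For the Frobenius number assertion, I would first note that $(1/e)S_n$ is a numerical semigroup with $\gcd$ equal to $1$, so the identity ${\rm F}(T)=\max {\rm Ap}(T,x)-x$ stated in the introduction applies to $T=(1/e)S_n$ and $x=\mathbf s_0/e$. Combined with the relation ${\rm Ap}(S_n,\mathbf s_0)=\{es:s\in {\rm Ap}((1/e)S_n,\mathbf s_0/e)\}$, this gives $\max {\rm Ap}((1/e)S_n,\mathbf s_0/e)=(1/e)\max {\rm Ap}(S_n,\mathbf s_0)=\sum_{j=1}^{m-1}\alpha_j(\mathbf s_j/e)$, and subtracting $\mathbf s_0/e$ finishes the proof.

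There is no real obstacle here since both key ingredients are already in hand; the only point requiring a bit of care is justifying the transfer between ${\rm Ap}(S_n,\mathbf s_0)$ and ${\rm Ap}((1/e)S_n,\mathbf s_0/e)$, but this is exactly the scaling identity recorded in the introduction and so only needs to be cited.
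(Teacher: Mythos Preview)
Your proposal is correct and matches the paper's intended reasoning: the paper states this corollary immediately after Proposition~\ref{prop-Apery_set_characterization} without a separate proof, treating it as a direct consequence of that proposition together with the order-preserving property of residual tuples and the standard formula ${\rm F}(S)=\max{\rm Ap}(S,x)-x$. Your write-up simply makes explicit the short chain of citations the paper leaves implicit.
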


\begin{proof}
The condition $\mathbf R_m\leq \mathbf s_1/\lambda$ is satisfied by Proposition \ref{prop-case_d>0}. Then, Theorem \ref{teo-Apery_set_characterization} applies. If $(\beta_1, \ldots, \beta_{m-1})<_c(\alpha_1, \ldots, \alpha_{m-1})$, let $t$ be the number of residual $r-$tuples $(\delta_1,\ldots,\delta_{m-1})$ such that 
\begin{equation*}
    (\beta_1,\ldots,\beta_{m-1})\leq_c(\delta_1,\ldots,\delta_{m-1})<_c(\alpha_1, \ldots, \alpha_{m-1}),
\end{equation*}
and $u$ the number of such $r-$tuples $(\delta_1,\ldots,\delta_{m-1})$ such that $\delta_i=a$ for some $i$. Then, 
\begin{equation*}
    \sum_{j=1}^r\alpha_j\mathbf s_j-\sum_{j=1}^r\beta_j\mathbf s_j=(t-u)\mathbf s_1+u(\mathbf s_1-a\mathbf s_0)=(t-u)\mathbf s_1+u(a-1)d>0,
\end{equation*}
since $t>0, u\leq t$ and $d>0$. This proves that $\max {\rm Ap}(S_n,\mathbf s_0)=\sum_{j=1}^{m-1}\alpha_j\mathbf s_j$.
\end{proof}

\begin{example}\label{ejem-frobenius}
\begin{enumerate}
    \item For $a\geq 2$, consider the generating sequence $x_n=a^{n}-1$. Then, $e(S_n)=n$ and $\lambda=\gcd(S_n)=a-1$. Assume $n>1$. Note that $\mathbf s_0/(a-1)=1+a\mathbf R_{n-1}$. Thus, ${\rm Ap}(S_n, \mathbf s_0)$ is the set of all combinations $\sum_{j=1}^{n-1}\beta_j\mathbf s_j$, where $(\beta_1, \ldots, \beta_{n-1})$ is a residual $(n-1)-$tuple less than or equal to $(0,\ldots, 0,a)$ in the co-lexicographic order. By Corollary \ref{cor-max_apery} we have 
\begin{align*}
    {\rm F}((1/(a-1))S_n)=a\frac{\mathbf s_{n-1}}{a-1}-\frac{\mathbf s_0}{a-1}
    =\frac{a^{2n}-a}{a-1}-\frac{a^{n}-1}{a-1}
    =a^n\mathbf R_{n}-1.
\end{align*}

\item Consider the generating sequence $x_n=5\cdot 3^n-1$. It is easy to find that $\lambda=\gcd(S_n)=2$. Since 
\begin{equation*}
    3^{n+3}-1>2\mathbf s_0=10\cdot 3^n-2=3^{n+2}+3^n-2>3^{n+2}-1,
\end{equation*}
by Proposition \ref{prop-case_d>0}, $e(S_n)=n+3$. Now, $\mathbf s_0/2=(5\cdot 3^n-1)/2$ can be represented as follows: 
\begin{equation*}
    \frac{\mathbf s_0}{2}=1+2\mathbf R_n+\mathbf R_{n+1}, 
\end{equation*}
so by Corollay \ref{cor-max_apery}, we obtain $\max {\rm Ap}(S_n, \mathbf s_0)=2\mathbf s_n+\mathbf s_{n+1}$ and a direct computation shows that 
\begin{equation*}
    {\rm F}((1/2)S_n)=(1/2)(2\mathbf s_n+\mathbf s_{n+1}-\mathbf s_0)=5\cdot 3^n\mathbf R_n+1.
\end{equation*}
\end{enumerate}
\end{example}

\paragraph{Generating sequences of type $x_n=(2^k-1)2^n-1$, where $k\geq 2$.}
We consider the generating sequence $x_n=(2^k-1)2^n-1$, where $k\geq 2$, which was treated in \cite{gu}. In this case, $S_n$ is a numerical semigroup for all $n\geq 1$, and we know that $e(S_n)=n+k$. We also have 
\begin{align*}
\mathbf s_0-1=2^{n+k}-2^n-2=\mathbf R_{n+k}-\mathbf R_n-2.
\end{align*}
Since $\mathbf R_{n+k}=\sum_{j=n}^{n+k-1}\mathbf R_j+\mathbf R_n+k$ (by \eqref{eq-general_relation_repunits}), we get 
\begin{equation*}
    \mathbf s_0-1=\sum_{j=n}^{n+k-1}\mathbf R_j+k-2.
\end{equation*}
If we suppose $2\leq k\leq 2^n$, which is equivalent to $0\leq k-2<\mathbf R_{n}$, then by Corollary \ref{cor-representation_residual}, there is a unique residual $(n-1)-$tuple $(k_1, \ldots, k_{n-1})$ such that $k-2=\sum_{j=1}^{n-1}k_j\mathbf R_j$. Therefore, 
\begin{equation*}
    \mathbf s_0-1=\sum_{j=n}^{n+k-1}\mathbf R_j+k-2=\sum_{j=n}^{n+k-1}\mathbf R_j+\sum_{j=1}^{n-1}k_j\mathbf R_j.
\end{equation*}
This representation was found in \cite{gu}, where they found a characterization of ${\rm Ap}(S_n,\mathbf s_0)$, computed ${\rm F}(S_n)$ and genus of $S_n$. Here, we are interested in computing $\max {\rm Ap}(S_n,\mathbf s_0)$ and ${\rm F}(S_n)$ in the case $k>2^n$, which was not solved in \cite{gu}.  

Assume $k>2^n$. In this case, we cannot represent $k-2$ in the form $k-2=\sum_{j=1}^{n-1}k_j\mathbf R_j$ for any residual $(n-1)-$tuple $(k_1, \ldots, k_{n-1})$. We proceed as follows. By \eqref{eq-general_relation_repunits}, we have $\mathbf R_{n+k-2^n-1}=\sum_{j=n}^{n+k-2^n-2}\mathbf R_j+\mathbf R_{n}+k-2^n-1$, or, equivalently, $k-2=\mathbf R_{n+k-2^n-1}-\sum_{j=n}^{n+k-2^n-2}\mathbf R_j$. Then,
\begin{align*}
    \sum_{j=n}^{n+k-1}\mathbf R_j+k-2&=\sum_{j=n}^{n+k-1}\mathbf R_j+\mathbf R_{n+k-2^n-1}-\sum_{j=n}^{n+k-2^n-2}\mathbf R_j\\
    &=\sum_{j=n+k-2^n}^{n+k-1}\mathbf R_j+2\mathbf R_{n+k-2^n-1}.
\end{align*}
Thus, the residual $(n+k-1)-$tuple that realizes $\max {\rm Ap}(S_n, \mathbf s_0)$ is $(0,\ldots,0,2,1,\ldots, 1)$, where the 2 is in the $(n+k-2^n-1)-$th coordinate. It follows that if $k>2^n$, then 
\begin{equation*}
    \max {\rm Ap}(S_n, \mathbf s_0)=2\mathbf s_{n+k-2^n-1}+\sum_{j=n+k-2^n}^{n+k-1}\mathbf s_{j}.
\end{equation*}
By \eqref{eq-general_relation_s_j}, we have $2\mathbf s_{n+k-2^n-1}+\sum_{j=n+k-2^n}^{n+k-1}\mathbf s_{j}=\mathbf s_{n+k}-2^n-1$, so we can compute the Frobenius number of $S_n$ as follows: 
\begin{align*}
    {\rm F}(S_n)&=\mathbf s_{n+k}-2^n-1-\mathbf s_0\\
    &=\mathbf s_{n+k}-2^n-1-2^{n+k}+2^n+1\\
    &=\mathbf s_{n+k}-2^{n+k}\\
    &=2^{2n+2k}-2^{2n+k}-2^{n+k}-1.
\end{align*}

\paragraph{Generating sequences of type $x_n=(a^k-1)a^n-1$.}
A natural generalization of the sequence treated in \cite{gu} is the generating sequence $x_n=(a^k-1)a^n-1$, where $a\geq 3$, $k\geq 1$. Here, $S_n$ is a numerical semigroup for all $n\geq 1$ and we know that $e(S_n)=n+k+1$. We will omit most of the calculations to compute the Frobenius number of $S_n$. By using \eqref{eq-general_relation_repunits} we obtain the following formula that hepls in finding the maximum of ${\rm Ap}(S_n, \mathbf s_0)$: 
\begin{equation*}
    \mathbf s_0-1
    =(a-2)\mathbf R_{n+k}+(a-1)\sum_{j=n+1}^{n+k-1}\mathbf R_j+\mathbf R_n+k-2
\end{equation*}
There are some cases: 
\paragraph{The case $k=1$.} If $n=1$, it is found that ${\rm F}(S_1)=(a-2)\mathbf s_2-\mathbf s_0=a^5-3a^4+2a^3-a^2+3$. Now, if $n>1$, then 
\begin{equation*}
    {\rm F}(S_n)=a^{2n+3}-3a^{2n+2}+3a^{2n+1}-a^{2n}-a^{n+1}+a^n-2a+3.
\end{equation*}

\paragraph{The case $n=1$ and $k>1$.} In this case 
\begin{align*}
    \mathbf s_0-1&=(a-2)\mathbf R_{k+1}+(a-1)\sum_{j=2}^{k}\mathbf R_j+\mathbf R_1+k-2\\
    &=(a-2)\mathbf R_{k+1}+(a-1)\sum_{j=2}^{k}\mathbf R_j+k-1.
\end{align*}
If $k\leq a+1$, then the Frobenius number of $S_1$ is given by: 
\begin{equation*}
    {\rm F}(S_n)=a^{2k+3}-a^{2k+2}-2a^{k+3}+ka^{k+2}-a^{k+1}+a^3-(k-1)a^2-(k-1)a+3.
\end{equation*}
Now, if $k>a+1$, then 
the Frobenius number of $S_1$ is given by 
\begin{equation*}
    {\rm F}(S_1)=a^{2k+3}-a^{2k+2}-a^{k+3}+a^{k+2}-a^{k+1}-a^2+3.
\end{equation*}
\paragraph{The case $n>1$ and $k\geq 2$.} Here we have two cases. In the first one, $0\leq k-2<(a-1)\mathbf R_n$.
By the division algorithm there exist unique integers $q$ and $r$ such that $0\leq q<a-1$, $0\leq r<\mathbf R_n$, and $k-2=q\mathbf R_n+r$. Then  
\begin{equation*}
\mathbf s_0-1=(a-2)\mathbf R_{n+k}+(a-1)\sum_{j=n+1}^{n+k-1}\mathbf R_j+(q+1)\mathbf R_n+r.   
\end{equation*}
Note that $q+1<a$. Also, there is a unique residual $(n-1)-$tuple $(t_1, \ldots, t_{n-1})$ such that $r=\sum_{j=1}^{n-1}t_j\mathbf R_j$. So the residual $(n+k)-$tuple that realizes $\max {\rm Ap}(S_n, \mathbf s_0)$ is $(t_1, \ldots, t_{n-1}, q+1, a-1,\ldots, a-1, a-2)$ and 
\begin{equation*}
\max {\rm Ap}(S_n, \mathbf s_0)=(a-2)\mathbf s_{n+k}+(a-1)\sum_{j=n+1}^{n+k-1}\mathbf s_j+(q+1)\mathbf s_n+\sum_{j=1}^{n-1}t_j\mathbf s_j.   
\end{equation*}
The Frobenius number results by subtracting $\mathbf s_0$ from $\max {\rm Ap}(S_n, \mathbf s_0)$, which we do not write down explicitly. 

The only remaining case is when $k-2\geq (a-1)\mathbf R_n$, and it can be shown that 
\[
    {\rm F}(S_n)=
    a^{2n+2k+1}-a^{2n+2k}-a^{2n+k+1}-a^{2n+k}-a^{n+k}-a^{n+1}+2a^n-2a+3.
\]

\section{The genus}
To calculate the genus of $S_n$ directly by means of the formula ${\rm g}(S)=\frac{1}{x}\sum_{w\in {\rm Ap}(S,x)}w-\frac{x-1}{2}$, it is useful to compute sums of the form 
\begin{equation*}
    \sum_{(\alpha_1,\ldots, \alpha_r)}\sum_{j=1}^{r}\alpha_j\mathbf s_j,
\end{equation*}
where the sum is taken over all residual $r-$tuples $(\alpha_1,\ldots, \alpha_r)$. The following result gives us a nice formula for this kind of sums. 

\begin{proposition}\label{prop-sum_genus}
For all $r\geq 1$, we have
\begin{equation*}
\sum_{(\alpha_1,\ldots, \alpha_r)}\sum_{j=1}^{r}\alpha_j\mathbf s_j=\frac{1}{2}\left(a\mathbf R_r\mathbf s_{r+1}+ra^{r+1}\mathbf s_0\right)
\end{equation*}
\end{proposition}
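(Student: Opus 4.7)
The plan is to prove the identity by induction on $r$, writing $T(r)$ for the left-hand side. For the base case $r=1$, the residual $1$-tuples are exactly $(0),(1),\ldots,(a)$, so $T(1)=\tfrac{a(a+1)}{2}\mathbf s_1$, and matching with the claimed $\tfrac{1}{2}(a\mathbf s_2+a^2\mathbf s_0)$ reduces, via $\mathbf s_2=a\mathbf s_1+(a-1)d$ and $\mathbf s_1=a\mathbf s_0+(a-1)d$, to a short direct calculation.

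For the inductive step, the key is to decompose $A(r+1)$ by the value of $\alpha_{r+1}$. The case $\alpha_{r+1}=a$ forces $\alpha_1=\cdots=\alpha_r=0$ (so it contributes a single term $a\mathbf s_{r+1}$), while for each $k\in\{0,1,\ldots,a-1\}$ the tuples with $\alpha_{r+1}=k$ are in bijection with $A(r)$, contributing $T(r)+k\,\mathbf R_{r+1}\mathbf s_{r+1}$. Summing these yields the recurrence
\begin{equation*}
T(r+1)=a\mathbf s_{r+1}+aT(r)+\frac{a(a-1)}{2}\mathbf R_{r+1}\mathbf s_{r+1}.
\end{equation*}

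Substituting the inductive hypothesis $T(r)=\tfrac{1}{2}(a\mathbf R_r\mathbf s_{r+1}+ra^{r+1}\mathbf s_0)$ and using the elementary identity $a\mathbf R_r=\mathbf R_{r+1}-1$, the coefficient of $\mathbf s_{r+1}$ in the resulting expression collapses to $\tfrac{a\mathbf R_{r+2}}{2}$, so that $T(r+1)=\tfrac{a\mathbf R_{r+2}}{2}\mathbf s_{r+1}+\tfrac{ra^{r+2}}{2}\mathbf s_0$. To convert this into the desired form, the bridge identity I need is
\begin{equation*}
\mathbf R_{r+2}\mathbf s_{r+1}-\mathbf R_{r+1}\mathbf s_{r+2}=a^{r+1}\mathbf s_0,
\end{equation*}
which follows cleanly from the closed form $\mathbf s_j=a^j\mathbf s_0+(a-1)d\,\mathbf R_j$ (an easy induction from $\mathbf s_{j+1}=a\mathbf s_j+(a-1)d$) together with $\mathbf R_{r+2}=a\mathbf R_{r+1}+1$. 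Applying this identity absorbs the extra $\tfrac{a^{r+2}}{2}\mathbf s_0$ term into the coefficient of $\mathbf s_0$, producing exactly $\tfrac{1}{2}(a\mathbf R_{r+1}\mathbf s_{r+2}+(r+1)a^{r+2}\mathbf s_0)$, completing the induction.

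The main obstacle is not conceptual but purely bookkeeping: after the inductive substitution one has a linear combination of $\mathbf s_0$ and $\mathbf s_{r+1}$, whereas the target involves $\mathbf s_0$ and $\mathbf s_{r+2}$, and the two expressions must be matched without routing everything through the cumbersome $\mathbf s_0$-and-$d$ expansion. The displayed bridge identity is the clean device that avoids this, and recognising it as a natural consequence of $\mathbf R_{r+2}-a\mathbf R_{r+1}=1$ is what keeps the step short.
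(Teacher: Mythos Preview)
Your proof is correct and self-contained, but it takes a genuinely different route from the paper's argument. The paper first appeals to an external result (Theorem~25 in \cite{repunit}) to obtain the coefficient formula
\[
\sum_{(\alpha_1,\ldots,\alpha_r)}\sum_{j=1}^{r}\alpha_j\mathbf s_j=\frac{1}{2}\sum_{j=1}^{r}\bigl(a^{r+1}+a^{r+1-j}\bigr)\mathbf s_j,
\]
and then substitutes the explicit form $\mathbf s_j=ca^{n+j}-d$ to collapse the right-hand side directly to $\tfrac{1}{2}(a\mathbf R_r\mathbf s_{r+1}+ra^{r+1}\mathbf s_0)$. Your argument instead proceeds by induction on $r$, decomposing $A(r+1)$ according to the last coordinate to derive the recurrence $T(r+1)=a\mathbf s_{r+1}+aT(r)+\tfrac{a(a-1)}{2}\mathbf R_{r+1}\mathbf s_{r+1}$, and then closes the induction with the bridge identity $\mathbf R_{r+2}\mathbf s_{r+1}-\mathbf R_{r+1}\mathbf s_{r+2}=a^{r+1}\mathbf s_0$. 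The virtue of your approach is that it is entirely internal to the paper: it never invokes the explicit value $ca^{n+j}-d$ of $\mathbf s_j$, relying only on the recurrence $\mathbf s_{j+1}=a\mathbf s_j+(a-1)d$ and the repunit relation $\mathbf R_{j+1}=a\mathbf R_j+1$, so it would apply verbatim to any sequence satisfying that recurrence. The paper's approach, by contrast, is shorter once the cited coefficient formula is granted, and gives slightly more information (the individual coefficient of each $\mathbf s_j$), at the cost of depending on an external reference and the specific closed form of $\mathbf s_j$.
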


\begin{proof}
First, we have
\begin{equation*}
\sum_{(\alpha_1,\ldots, \alpha_r)}\sum_{j=1}^{r}\alpha_j\mathbf s_j=\frac{1}{2}\sum_{j=1}^{r}\left(a^{r+1}+a^{r+1-j}\right)\mathbf s_j.
\end{equation*}
This formula follows exactly as in the proof of Theorem 25 in \cite{repunit} where, although that proof is for the generating sequence of repunit numbers $\mathbf R_n=(a^n-1)/(a-1)$, it works for a general generating sequence. Now, \begin{align*}
    \sum_{j=1}^{r}\left(a^{r+1}+a^{r+1-j}\right)\mathbf s_j&=a^{r+1}\sum_{j=1}^{r}(ca^{n+j}-d)+\sum_{j=1}^ra^{r+1-j}(ca^{n+j}-d)\\
    &=a^{r+1}\left(ca^{n+1}\sum_{j=1}^{r}a^{j-1}-rd\right)+\left(rca^{r+n+1}-da\sum_{j=1}^{r}a^{r-j}\right)\\
    &=ca^{r+n+2}\mathbf R_{r}-rda^{r+1}+rca^{r+n+1}-da\mathbf R_r\\
    &=a\mathbf R_{r}\mathbf s_{r+1}+ra^{r+1}\mathbf s_0
\end{align*}
\end{proof}

\begin{example}
Consider the generating sequence $x_n=5\cdot 3^n-1$. To find the genus of $S_n$, first we calculate the sum $\sum_{(\beta_1,\ldots, \beta_{n+2})}\left(\sum_{j=1}^{n+2}\beta_j\mathbf s_j\right)$ taken over all residual $(n+2)-$tuples $(\beta_1,\ldots, \beta_{n+2})$ which are less than or equal to $(0,\ldots, 0, 2,1,0)$ with respect to the colexicographic order:
\begin{align*}
    \sum_{(\beta_1,\ldots, \beta_{n+2})}\left(\sum_{j=1}^{n+2}\beta_j\mathbf s_j\right)&=\sum_{(\beta_1,\ldots, \beta_{n+1})}\left(\sum_{j=1}^{n+1}\beta_j\mathbf s_j\right)\\
    &=\sum_{(\beta_1,\ldots, \beta_{n})}\left(\sum_{j=1}^{n}\beta_j\mathbf s_j\right)+\sum_{(\beta_1,\ldots, \beta_{n})}\left(\sum_{j=1}^{n}\beta_j\mathbf s_j+\mathbf s_{n+1}\right)\\
    &=\sum_{(\beta_1,\ldots, \beta_{n})}\left(\sum_{j=1}^{n}\beta_j\mathbf s_j\right)+\sum_{(\beta_1,\ldots, \beta_{n-1})}\left(\sum_{j=1}^{n-1}\beta_j\mathbf s_j+\mathbf s_{n+1}\right)\\
    & +\sum_{(\beta_1,\ldots, \beta_{n-1})}\left(\sum_{j=1}^{n-1}\beta_j\mathbf s_j+\mathbf s_n+\mathbf s_{n+1}\right)+(2\mathbf s_n+\mathbf s_{n+1}).\\
\end{align*}
In the last expression, the sums are taken over all residual tuples, without restriction. By Applying Proposition \ref{prop-sum_genus}, we obtain
\begin{align*}
    \sum_{(\beta_1,\ldots, \beta_{n+2})}\left(\sum_{j=1}^{n+2}\beta_j\mathbf s_j\right)&=\frac{1}{2}\left(3\mathbf R_n\mathbf s_{n+1}+n3^{n+1}\mathbf s_0\right)+\frac{1}{2}\left(3\mathbf R_{n-1}\mathbf s_n+(n-1)3^{n}\mathbf s_0\right)\\
    &+\mathbf R_n\mathbf s_{n+1}+\frac{1}{2}\left(3\mathbf R_{n-1}\mathbf s_n+(n-1)3^{n}\mathbf s_0\right)+\mathbf R_n(\mathbf s_n+\mathbf s_{n+1})\\
    &+(2\mathbf s_n+\mathbf s_{n+1})\\
    &=\frac{1}{2}\left((5n+6+25\mathbf R_n)3^n+5\mathbf R_n+1\right)\mathbf s_0.
\end{align*}
Finally, we have 
\begin{align*}
    g((1/2)S_n)&=\frac{1}{(\mathbf s_0/2)}\sum_{(\beta_1,\ldots, \beta_{n+2})}\left(\sum_{j=1}^{n+2}\beta_j(\mathbf s_j/2)\right)-\frac{\mathbf s_0/2-1}{2}\\
    &=\frac{1}{2}\left((5n+6+25\mathbf R_n)3^n+5\mathbf R_n+1\right)-\frac{\mathbf s_0/2-1}{2}\\
    &=\frac{1}{2}(5n+6+25\mathbf R_n)3^n.
\end{align*}
\end{example}

This way of computing the genus of $S_n$ is tedious in general. A recursive way of computing the sum $\sum_{(\beta_j)}\sum_{j=1}^{m-1}\beta_j\mathbf s_j$ over all residual $(m-1)-$tuples $(\beta_1, \ldots, \beta_{m-1})$, less than or equal to $(\alpha_1, \ldots, \alpha_{m-1})$ in the colexicographic order, can be described as follows: Given the residual $(m-1)-$tuple $(\alpha_1, \ldots, \alpha_{m-1})$ under the conditions of Theorem \ref{teo-Apery_set_characterization}, define $G_t$ for $t=1, \ldots, m-1$ as follows:
\begin{equation*}
    G_t=\sum_{(\beta_j)}\sum_{j=1}^t\beta_j\mathbf s_j,
\end{equation*}
where the left-most sum is taken over all residual $t-$tuples $(\beta_1, \ldots, \beta_t)$ such that $(\beta_1, \ldots, \beta_t)\leq_c(\alpha_1, \ldots, \alpha_t)$. It can be shown that 
\begin{equation*}
    G_t=\sum_{j=1}^t\alpha_j\mathbf s_j+\mathbf s_0\frac{\alpha_t(\alpha_t-1)}{2}\left(1+\sum_{j=1}^{t-1}\alpha_j\mathbf R_j\right)+\alpha_tG_{t-1},
\end{equation*}
and this can be used to find ${\rm g}(S_n)$ recursively. 
\section{Pseudo-Frobenius numbers and type}

Let $m=e(S_n), \lambda=\gcd(S_n)$ and assume that $\mathbf R_m\leq \mathbf s_1/\lambda$. Theorem \ref{teo-Apery_set_characterization} gives us a characterization of ${\rm Ap}(S_n, \mathbf s_0)$. If $(\alpha_1, \ldots, \alpha_{m-1})$ is the only residual $(m-1)-$tuple such that $\mathbf s_0/\lambda-1=\sum_{j=1}^{m-1}\alpha_j\mathbf R_j$, let $B(m-1)$ be the set of all residual $(m-1)$-tuples $(\beta_1, \ldots, \beta_{m-1})$ that are less than or equal to $(\alpha_1, \ldots, \alpha_{m-1})$ with respect to the colexicographic order. Then, the elements of ${\rm Ap}(S_n, \mathbf s_0)$ are precisely those elements in $S_n$ of the form $\sum_{j=1}^{m-1}\beta_j\mathbf s_j$, where $(\beta_1, \ldots, \beta_{m-1})\in B(m-1)$. 

To find the pseudo-Frobenius numbers of $(1/\lambda)S_n$, we must find first the maximal elements of ${\rm Ap}(S_n,\mathbf s_0)$, with respect to the order $\leq_{S_n}$ given by $a\leq_{S_n}b$ if and only if $b-a\in S_n$. Then, the pseudo-Frobenius numbers of $(1/\lambda)S_n$ are those integers of the form $m/\lambda-\mathbf s_0/\lambda$, where $m$ is maximal in ${\rm Ap}(S_n,\mathbf s_0)$, with respect to the order $\leq_{S_n}$.

First, we find the maximal $(m-1)-$tuples $(\beta_1, \ldots, \beta_{m-1})\in B(m-1)$ with respect to the \textit{product order}. We describe these $(m-1)-$tuples as follows: Let $j_1<\cdots<j_r$ the only indexes $j\in \{2,\ldots, m-1\}$ such that $\alpha_j>0$. The maximal $(m-1)-$tuples in $B(m-1)$ with respect to the product order can be partitioned into $r+1$ subsets of $B(m-1)$, $M_0, M_1,\ldots, M_r$, as we describe next. For $j\in \{j_1, \ldots, j_r\}$, let 
\begin{align*}
    M_j=\{&(a,a-1,\ldots, a-1,\alpha_{j}-1, \alpha_{j+1},\ldots, \alpha_{m-1}),\\
    &(0,a,a-1,\ldots, a-1,\alpha_{j}-1,\alpha_{j+1},\ldots, \alpha_{m-1}),\\
    &(0,0,a,a-1,\ldots, a-1,\alpha_{j}-1, \alpha_{j+1},\ldots, \alpha_{m-1}),\\
    & \ \ \vdots\\
    &(0,\ldots, 0,a,\alpha_{j_t}-1, \alpha_{j_t+1},\ldots, \alpha_{m-1})\},
\end{align*}
and let $M_0=\{(\alpha_1, \ldots, \alpha_{m-1})\}$. As we see, the set $M_0$ has 1 element and $M_j$ has $j-1$ elements for each $j\in\{j_1,\ldots,j_r\}$.

For $j\in\{j_1,\ldots,j_r\}$, each $(m-1)-$tuple in $M_j$ generates an element of ${\rm Ap}(S_n,\mathbf s_0)$. Namely, the tuple $(0,\ldots, 0, a, a-1, \ldots, a-1, \alpha_{j}-1, \alpha_{j+1}, \ldots, \alpha_{m-1})$, where the $a$ is in position $i$, $1\leq i<j$, corresponds to the element 
\begin{align*}
    \mathbf f_{i,j}&:=a\mathbf s_i+(a-1)\sum_{l=i+1}^{j-1}\mathbf s_l+(\alpha_{j}-1)\mathbf s_{j}+\sum_{l=j+1}^{m-1}\alpha_l\mathbf s_l\\
    &=a\mathbf s_{j-1}-(j-i-1)(a-1)d+(\alpha_{j}-1)\mathbf s_{j}+\sum_{l=j+1}^{m-1}\alpha_l\mathbf s_l\\
    &=\alpha_{j}\mathbf s_{j}+\sum_{l=j+1}^{m-1}\alpha_l\mathbf s_l-(j-i)(a-1)d\\
    &=\sum_{l=j}^{m-1}\alpha_l\mathbf s_l-(j-i)(a-1)d.
\end{align*}
Also, the tuple $(\alpha_1, \ldots, \alpha_{m-1})$ corresponds to 
\begin{equation*}
    \mathbf f_{0}:=\sum_{l=1}^{m-1}\alpha_l\mathbf s_l.
\end{equation*}
This way, the candidates to maximal elements of ${\rm Ap}(S_n, \mathbf s_0)$ with respect to $\leq_{S_n}$ are the elements $\mathbf f_{i,j}$, where $j\in\{j_1,\ldots,j_r\}, 1\leq i<j$, along with the element $\mathbf f_{0}$. 

Now, assume that $1\leq t<r$. For $1\leq i<j_{t}$, we have 
\begin{align*}
    \mathbf f_{i,j_{t}}-\mathbf f_{i+j_{t+1}-j_t,j_{t+1}}&=\sum_{l=j_t}^{m-1}\alpha_l\mathbf s_l-(j_t-i)(a-1)d-\sum_{l=j_{t+1}}^{m-1}\alpha_l\mathbf s_l+(j_{t}-i)(a-1)d\\
    &=\sum_{l=j_t}^{j_{t+1}-1}\alpha_l\mathbf s_l\in S_n.
\end{align*}
This shows that $\mathbf f_{j_{t+1},i+j_{t+1}-j_t}\leq_{S_n}\mathbf f_{j_{t},i}$. If we set $j_0=1$, the candidates to pseudo-Frobenius numbers of $S_n$ reduce to the elements $\mathbf f_{i,j_t}$, $t\in \{1, \ldots,r\}$, $j_{t-1}\leq i<j_t$, along with the element $\mathbf f_0$. 

\begin{theorem}\label{teo-pseudo}
Let $m=e(S_n), \lambda=\gcd(S_n)$ and assume that $\mathbf R_m\leq \mathbf s_1/\lambda$. If $(\alpha_1, \ldots, \alpha_{m-1})$ is the only residual $(m-1)-$tuple such that $\mathbf s_0/\lambda-1=\sum_{j=1}^{m-1}\alpha_j\mathbf R_j$, then 
\begin{equation*}
    {\rm PF}(S_n)\subseteq\{\mathbf f_0\}\cup\{\mathbf f_{i,j_t}: t\in \{1,\ldots, r\}, j_{t-1}\leq i<j_t\}.
\end{equation*}
In particular, the type of $S_n$ satisfies the inequality ${\rm t}(S_n)\leq e(S_n)-1$.
\end{theorem}

\begin{proof}
It only remains to prove that ${\rm t}(S_n)\leq e(S_n)-1$. Now, for $t\in\{1,\ldots, r\}$, there are
$j_{t}-j_{t-1}$ elements of the form $\mathbf f_{i,j_t}$, $j_{t-1}\leq i<j_t$. Then, the number of candidates to pseudo-Frobenius numbers of $S_n$ is 
\begin{equation*}
    1+\sum_{t=1}^{r}(j_t-j_{t-1})=j_r\leq m-1=e(S_n)-1.
\end{equation*}
This ends the proof.
\end{proof}

The equality ${\rm t}(S_n)=e(S_n)-1$ holds in many cases, for instance, for the semigroups considered in \cite{mersenne, repunit, thabit}. Moreover, as we see in the proof of Theorem \ref{teo-pseudo}, ${\rm t}(S_n)\leq j_r$, where $j_r$ is the greatest integer $j$ in $\{1,\ldots, m-1\}$ such that $\alpha_j>0$. For instance, in part 2 of Example \ref{ejem-frobenius}, for $\mathbf s_0=5\cdot 3^n-1$ we found that $e(S_n)=n+3$, while $\mathbf s_0/\lambda-1=2\mathbf R_n+\mathbf R_{n+1}$, which implies that ${\rm t}(S_n)\leq n+1$. This example shows that the type of $S_n$ can be strictly less than $e(S_n)-1$.

Finally, a conjecture of Wilf on numerical semigroups establishes that for any numerical semigroup $S$, if $n(S)$ is the number of elements in $S$ less than ${\rm F}(S)$, then ${\rm F}(S)+1\leq n(S)\cdot e(S)$. This conjectures has been verified for various families of numerical semigroups, see \cite{delgado}. Here, we can prove that Wilf's conjecture is true for the numerical semigroup $(1/\lambda)S_n$, under the hypothesis of Theorem \ref{teo-pseudo}. In fact, this follows by \cite[Theorem 20]{froberg}, which states the inequality ${\rm F}(S)+1\leq n(S)({\rm t}(S)+1)$ for any numerical semigroup $S$. In our case, we have ${\rm t}(S_n)+1\leq e(S_n)$, so Wilf's conjecture follows trivially for $S_n$, under the hypothesis of Theorem \ref{teo-pseudo}.

\end{document}